\documentclass{article}%

\usepackage{graphicx}
\usepackage{amsmath,amssymb,amsfonts}%
\usepackage{theorem}%
\usepackage{color}%
\usepackage{hyperref}
\usepackage[font={small, it}]{caption}

\setlength{\textwidth}{17cm}
\setlength{\textheight}{9in}
\setlength{\oddsidemargin}{0in}
\setlength{\topmargin}{-1.5cm}
\usepackage[all]{xy}

\setlength{\parindent}{0ex}%
\setlength{\parskip}{1ex}%

\theoremstyle{change}%
	\sloppy%
	
	\newtheorem{definition}{Definition:}[section]%
	\newtheorem{proposition}[definition]{Proposition:}%
	\newtheorem{theorem}[definition]{Theorem:}%
	\newtheorem{lemma}[definition]{Lemma:}%
	\newtheorem{corollary}[definition]{Corollary:}%
	{\theorembodyfont{\rmfamily}\newtheorem{remark}[definition]{Remark:}}%
	{\theorembodyfont{\rmfamily}\newtheorem{example}[definition]{Example:}}%
	
	\newenvironment{proof}
	{{\bf Proof:}}
	{\qquad \hspace*{\fill} $\Box$}%

	\newcommand{\id}{\operatorname{id}}

	\newcommand{\JC}{\mathcal{J}}%
	\newcommand{\CC}{\mathcal{C}}%
	\newcommand{\OC}{\mathcal{O}}%
	\newcommand{\UC}{\mathcal{U}}%
	\newcommand{\AC}{\mathcal{A}}%
	\newcommand{\KC}{\mathcal{K}}%
	\newcommand{\BC}{\mathcal{B}}%
	\newcommand{\N}{\mathbb{N}}%

	\begin{document}

	\title{Dynamics on Hyperspaces}
	\author{V\'{\i}ctor Ayala \thanks{%
			Supported by Proyecto Fondecyt $n^{o}$ 1150292, Conicyt, Chile} \ and
		Heriberto Rom\'{a}n-Flores \thanks{%
			Supported by Proyecto Fondecyt $n^{o}$ 1151159, Conicyt, Chile} \\
		Universidad de Tarapac\'a\\
		Instituto de Alta Investigaci\'on\\
		Casilla 7D, Arica, Chile\\
		and\\
		Adriano Da Silva \thanks{%
			Supported by Fapesp grant n%
			${{}^o}$
			2018/10696-6.}\\
		Instituto de Matem\'atica,\\
		Universidade Estadual de Campinas\\
		Cx. Postal 6065, 13.081-970 Campinas-SP, Brasil.\\
	}
	\date{\today }
	\maketitle
	
	\begin{abstract}
	Given a compact metric space $(X, \varrho)$ and a continuous function $f: X\rightarrow X$, we study the
	dynamics of the induced map $\bar{f}$ on the hyperspace of the compact subsets of $X$. We show how the chain recurrent set of $f$ and 
	its components are related with the one of the induced map. The main result of the paper proves that, under mild conditions, the numbers of chain components of  $\bar{f}$ is greater than the ones of $f$. Showing the richness in the dynamics of $\bar{f}$ which cannot be perceived by $f$.
	\end{abstract}
	
	\textbf{Key words:} Hyperspaces, chain recurrence, chain components
	
	\textbf{2010 Mathematics Subject Classification: 54B20, 37B20, 37B25}

	\section{Introduction}

Let $(X,\varrho)$ be a compact metric space and $f:X\rightarrow X$ a
continuous function. Our main goal here is to analyze the dynamic behavior of
$f$. However, in real life, the knowledge of the system $f$ is not alway
exact. In fact, each measurement of data carries an uncertainty and it is much
convenient to consider a neighborhood of the state to be measured. In order to
do that, we study the induced function $\bar{f}:\mathcal{K}(X)\rightarrow
\mathcal{K}(X)$ on the hyperspace of nonempty compact subsets of $X$ endowed
with the Hausdorff metric$.$ According to Bauer et al \cite{BAUER}
\textquotedblleft The elements of $\mathcal{K}(X)$ can be viewed as
statistical states, representing imperfect knowledge of the system. The
elements of $X$ are imbedded in $\mathcal{K}(X)$ as the pure
states (see also \cite{ROMAN3}).

In this work we investigate some topological dynamics properties of $f$ and
$\bar{f}$. Precisely, the chain recurrence set of $\bar{f},$ its chain
recurrence components and the relationship with the chain recurrent set of
$f.$

There are many works describing the dynamic behavior of the map $f$. Some of
them analyze the relations between the dynamic of $f$ and those of $\bar{f}$,
\cite{BANKS},\cite{ROMAN1},and \cite{ROMAN3}. In the paper \emph{Chain
	transitivity in hyperspaces}, \cite{LFCGMAAR}, the authors explore which of
these topological properties of $f$ are inherited by $\bar{f}$, and
reciprocally, especially those related to chain transitivity. Our approach is novel and it based on the Conley theory and the
functor $\mathcal{K},$ which works inside of the category of compact spaces
and continuous functions. As a matter of fact, several objects and properties
of $\bar{f}$ can be recovered by this functor through the decomposition of the
recurrent set $\mathcal{C}$ of $f$ \ by its attractors. In fact, if $A$ is an
attractor,\ the dual $A^{\ast}$ of $A$ defined by
\[
A^{\ast}=\left \{  x\in X:\mathcal{O}(x,f)\cap A=\varnothing \right \}
\]
is a repellor and $(A,A^{\ast})$ is called an attractor-repellor pair for $f.$
We prove that $(\mathcal{K}(A),\mathcal{K}(A^{\ast}))$ is an atractor-repellor
pair for the extension $\bar{f}$. And, we do not know if each $\bar{f}%
$-attractor comes from an $f$-attractor.

Through the Conley relationship between the recurrence set $C$ of the original
function $f$ and its attrators given by
\[
\mathcal{C}=\cap \text{ }\left \{  A\cup A^{\ast}:A\text{ is an attractor for
}f\right \}  ,
\]
we show that $\mathcal{K}(\mathcal{C})=$ $\overset{-}{\mathcal{C}}$.
Furthermore, we analyze how the chain recurrent components of $\mathcal{C}$
and $\overset{-}{\mathcal{C}}$ are related. It turns out that under mild
conditions the numbers of chain components of $\overset{-}{\mathcal{C}}$ is
bigger than the ones of $\mathcal{C}$. This fact shows a richness in the
dynamics of $\bar{f}$ which cannot be perceived by the dynamic of $f.$ But,
what are the relations between the chain recurrent components of $\mathcal{C}$
and those of $\overset{-}{\mathcal{C}}?$ Let us denote by $B$ the set of chain
components of $f.$ In order to approach the chain recurrent components of
$\bar{\mathcal{C}}$ we introduce the sets
\[
\mathcal{C}_{\mathcal{J}}:=\left \{  A\in \mathcal{K}(X):\;A\subset \bigcup
_{P\in \mathcal{J}}P\; \mbox{ and }\;A\cap P\neq \emptyset,\;P\in \mathcal{J}%
\right \}  ,
\]
when $\mathcal{J}\in \mathcal{K}(\mathcal{B})$. Then, we prove that each
$\mathcal{C}_{\mathcal{J}}$ is $\bar{f}$-invariant, compact and the disjoint
union of these sets over $\mathcal{K}(\mathcal{B})$ cover $\bar{\mathcal{C}%
},i.e.,$
\[
\bar{\mathcal{C}}\text{ }=\dot{\bigcup_{\mathcal{J}\in \mathcal{K}%
		(\mathcal{B})}}\mathcal{C}_{\mathcal{J}}.
\]
This fact allow to prove our main results.

\begin{theorem}
	The set $\mathcal{C}_{\mathcal{J}}$ is a chain component of $\bar{f}$ if
	$\mathcal{K}(P)$ is chain transitive for any $P\in \mathcal{J}$. In particular,
	the family $\left \{  \mathcal{C}_{\mathcal{J}}:\mathcal{J}\in \mathcal{K}%
	(\mathcal{B}\right \}  $ coincide with the chain components of $\bar{f}$ if
	$\mathcal{K}(P)$ is chain transitive for any $P\in \mathcal{B}$.
\end{theorem}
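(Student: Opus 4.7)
The plan is to prove the theorem by establishing two assertions about $\mathcal{C}_{\mathcal{J}}$: that it is chain transitive for $\bar{f}$, and that it is maximal with this property. Together these give that $\mathcal{C}_{\mathcal{J}}$ is a full chain component; the second statement of the theorem then drops out from the disjoint decomposition $\bar{\mathcal{C}} = \bigsqcup_{\mathcal{J}'\in\mathcal{K}(\mathcal{B})} \mathcal{C}_{\mathcal{J}'}$ recorded in the introduction, since under its hypothesis every $\mathcal{C}_{\mathcal{J}}$ qualifies.

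For chain transitivity, I would fix $A,B \in \mathcal{C}_{\mathcal{J}}$ and $\epsilon > 0$ and work coordinate by coordinate over $P \in \mathcal{J}$. The intersections $A_P := A \cap P$ and $B_P := B \cap P$ are nonempty and lie in $\mathcal{K}(P)$, so the hypothesis yields an $(\epsilon, n_P)$-chain $A_P = C_P^0, \dots, C_P^{n_P} = B_P$ in $\mathcal{K}(P)$. After padding by repetition of the endpoints to a common length $n$, the sets $D_k := \bigcup_{P \in \mathcal{J}} C_P^k$ lie in $\mathcal{C}_{\mathcal{J}}$ (they are contained in $\bigcup_{P \in \mathcal{J}} P$ and meet each $P$), satisfy $D_0 = A$ and $D_n = B$, and the Hausdorff-distance inequality
\[
d_H\bigl(\bar{f}(D_k), D_{k+1}\bigr) \;\le\; \sup_{P \in \mathcal{J}} d_H\bigl(\bar{f}(C_P^k), C_P^{k+1}\bigr) \;<\; \epsilon
\]
yields the desired $(\epsilon, n)$-chain inside $\mathcal{C}_{\mathcal{J}}$. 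For maximality, any chain component of $\bar{f}$ lies in $\bar{\mathcal{C}}$ and, by the previous step, contains all of $\mathcal{C}_{\mathcal{J}}$, so it suffices to exclude that some $B' \in \mathcal{C}_{\mathcal{J}'}$ with $\mathcal{J}' \neq \mathcal{J}$ is chain equivalent to some $B \in \mathcal{C}_{\mathcal{J}}$ under $\bar{f}$. For this I would use the standard lifting: since $d_H(\bar{f}(D_i), D_{i+1}) < \epsilon$ forces each point of $f(D_i)$ to be within $\epsilon$ of a point of $D_{i+1}$, any $x_0 \in B$ can be tracked along an $\bar{f}$-chain $B = D_0, \dots, D_n = B'$ to yield an $\epsilon$-chain $x_0, \dots, x_n$ in $f$ landing at some $x_n \in B'$. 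Combining this with the reverse chain, letting $\epsilon \to 0$, and using compactness inside $\bar{\mathcal{C}}$, one obtains an $f$-chain-equivalent pair $(x, x^\ast) \in B \times B'$. If $x \in P$ this forces $x^\ast \in P$ and hence $P \in \mathcal{J}'$; symmetry then gives $\mathcal{J} = \mathcal{J}'$.

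The main obstacles are twofold. In the chain-transitivity step one needs a chain length $n$ uniform in $P$ when $\mathcal{J}$ is an infinite compact subfamily of $\mathcal{B}$, since there is no a priori bound on $n_P$; a compactness argument on $\mathcal{J}$ or a semicontinuity of the minimal chain length as a function of $P$ will be required before the padding step. In the maximality step the delicate point is closing the loop at the pointwise level: a single lifted chain only gives a one-sided $f$-chain from $x$ to some $x^\ast \in B'$, while the traced return from $B'$ to $B$ lands at a generically distinct point, so recovering genuine $f$-chain-equivalence of $x$ and $x^\ast$ will need iteration of the back-and-forth tracing together with closedness of the chain-equivalence relation on the compact set $\bar{\mathcal{C}}$.
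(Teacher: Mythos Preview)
Your finite-$\mathcal{J}$ argument is essentially the paper's, with one correction: ``padding by repetition of the endpoints'' does not work, since repeating $C_P^{n_P}$ requires $\varrho_H(\bar f(C_P^{n_P}),C_P^{n_P})<\epsilon$, which is false in general. The paper instead observes that $P\in\mathcal{K}(P)$ is a \emph{fixed point} of $\bar f$ (because $f(P)=P$), so one builds each chain as $A_P\leadsto P\leadsto B_P$ and lingers at $P$ as long as necessary; this yields chains of any length above a threshold and allows the lengths to be synchronised. Your maximality argument is also fine; in the paper it is isolated as a separate proposition (showing $A_1\sim A_2$ with $A_i\in\mathcal{C}_{\mathcal{J}_i}$ forces $\mathcal{J}_1=\mathcal{J}_2$, so that each $\mathcal{C}_{\mathcal{J}}$ is a priori a union of chain components), and only chain transitivity is left to prove here.

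The genuine gap is your treatment of infinite $\mathcal{J}$. Beyond the uniform-length issue you flag, the very objects $D_k=\bigcup_{P\in\mathcal{J}}C_P^k$ need not be compact: for an infinite family there is no continuity of $P\mapsto C_P^k$, and an arbitrary union of compact sets inside $\bigcup_{P\in\mathcal{J}}P$ has no reason to be closed (e.g.\ if $P_n\to P_\infty$ in $\mathcal{B}$ and $C_{P_\infty}^k\subsetneq P_\infty$). Your proposed semicontinuity of minimal chain length is vague and does not address this. The paper takes a different route that avoids both obstacles: given $\delta>0$, compactness of $A$ and $B$ yields a \emph{finite} subset $\mathcal{J}_{A,B}\subset\mathcal{J}$ such that the finite unions $A_\delta:=\bigcup_{P\in\mathcal{J}_{A,B}}A\cap P$ and $B_\delta:=\bigcup_{P\in\mathcal{J}_{A,B}}B\cap P$ satisfy $\varrho_H(A,A_\delta)<\delta$ and $\varrho_H(B,B_\delta)<\delta$. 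Since $A_\delta,B_\delta\in\mathcal{C}_{\mathcal{J}_{A,B}}$, the finite case gives $A_\delta\sim B_\delta$; letting $\delta\to 0$ and invoking the closedness of $\sim$ under limits (Lemma~\ref{easy}, item~1) yields $A\sim B$. This finite-approximation-plus-limit scheme replaces your attempted uniform coordinate-wise construction.
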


In particular, if $\mathcal{K}(P)$ is chain transitive for any $P\in
\mathcal{B}$ then
\[
|\mathcal{B}|<\infty \; \; \implies \; \;|\bar{\mathcal{B}}|\text{ }%
=2^{|\mathcal{B}|}-1.
\]

It is important to notice that the chain transitivity of $\mathcal{K}(P)$ does
not follows directly from the chain transitivity of $P$. Actually, in
\cite{LFCGMAAR} the authors give several equivalences for the transitivity of
$\mathcal{K}(P).$

The paper is structured as follows. In Section $2,$ we mention all the
prerequisites needed to state and prove the main results of the paper. Its
include the chain recurrent set of $f$ and its components, the notion of an
attractor-repellor pair, some topological properties of functions on
hyperspaces and the relationship between extension and the canonical
projection $\pi:\mathcal{K}(X)\rightarrow X.$ In Section $3,$ we state our
main results about the chain recurrent components of $\bar{f}.$ Finally, we
give a couple of examples. One of them shows that $\bar{\mathcal{B}}$ is not
numerable depite the fact that $\mathcal{B}$ it is.
		
	\section{Preliminaries}
	
	In this section we state all the prerequisites needed in order to understand the main results of the paper.

	\subsection{The chain recurrent set and its components}
	
	Let $(X, \varrho)$ be a compact metric space and $f:X \rightarrow X$ a continuous function. For any $n> 0$ we denote by $f^n$ the $n$-th iterated of $f$ and $f^0=\id_{X}$. The \emph{ forward orbit } of $x\in X$ is the sequence $f^n(x)$, $n\geq 0$ and is denoted by $\OC(x, f):=\{f^n(x), \;\;n\geq 0\}$. If $A\subset X$ is nonempty, then its \emph{omega-limit set} under $f$ is 
	$$
	\omega(A, f):=\bigcap_k\overline{\bigcup_{n\geq k}f^n(A)}.
	$$
	If $U$ is an open neighborhood of $\omega(A, f)$ then $f^n(A)\subset U$ for $n$ sufficiently large. Actually the omega-limit set is the smallest close set with this property. If $A=\{x\}$ is a singleton, we denote its omega-limit set only by $\omega(x, f)$. A subset $A$ is called \emph{forward invariant} under the map $f$ if $f(A)\subset A$ and {\it invariant} if $f(A)=A$. Note that $\omega(A, f)$ is the largest invariant subset of $f$ contained in $\overline{\OC(A, f)}$. 
	
	Our investigation concerns Conley's \emph{chain recurrent set}, whose definition follows: Given $\varepsilon>0$ an $\epsilon$-\emph{chain} $\xi$ is a finite subset  $\xi=\{x_0, x_1, \ldots, x_n\}$ with $n>0$ such that $\varrho(f(x_{j-1}), x_j))<\varepsilon$ for $0<j<n$. We say that the $\epsilon$-chain starts at $x_0$ and ends at $x_n$, and the integer $n>0$ is its \emph{length}. We denote the set of points that are ends of $\epsilon$-chains beginning at $x$ by $\CC_{\epsilon}(x, f)$, and define 
	$$
	\CC(x, f):=\bigcap_{\epsilon>0}\CC_{\epsilon}(x, f).
	$$ 
	If $0<\delta<\epsilon$ then $\overline{\CC_{\delta}(x, f)}\subset\CC_{\epsilon}(x, f)$ and therefore $\CC(x, f)$ is a closed set. A point $x\in X$ is said to be {\it chain recurrent} if $x\in\CC(x, f)$. The set of the chain recurrent points, denoted by $\CC(f)$, is a nonempty, compact invariant subset of $X$. In $\CC(f)$ one defines the relation $\sim$ by
	$$x\sim y\;\;\mbox{ if }\;\;x\in\CC(y, f)\;\;\mbox{ and }\;\;y\in\CC(x, f).$$ 
	The associated equivalence classes are called the {\it chain components} for $f$. Each of such components is closed and invariant for $f$. 
	
	Let us denote by $\BC_f$ the set of chain components of $\CC(f)$, that is, 
	$$\BC_f:=\{P\in\KC(X), \;\;P \;\mbox{ is a chain component of }\; \CC(f)\}.$$
	The quotient map from $\CC(f)$ to $\BC_f=\CC(f)/\sim$ induces upon the latter the structure of a compact metrizable space. 
	
	We finish this section with a technical lemma which will be needed ahead.
		
	\begin{lemma}
		\label{easy}
		It holds:
		\begin{itemize}
			\item[1.] Let $x_n, y_n\in X$ and assume that $x_n\sim y_n$. If $x_n\rightarrow x$ and $y_n\rightarrow y$ then $x\sim y$;
			\item[2.] 	Let $x, y\in \CC$ be distinct elements. Then, there exists an $\epsilon$-chain between $x$ and $y$ contained in $\CC$ if and only if there exists an $\epsilon$-chain between $y$ and $x$ contained in $\CC$. 
		\end{itemize}
	\end{lemma}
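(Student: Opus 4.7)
For part 1, my plan is a standard uniform continuity argument. Given $\epsilon > 0$, using the uniform continuity of $f$ on the compact space $X$, choose $\delta \in (0, \epsilon/3)$ such that $\varrho(a,b) < \delta$ implies $\varrho(f(a), f(b)) < \epsilon/3$. For $n$ large enough we have $\varrho(x_n, x) < \delta$ and $\varrho(y_n, y) < \delta$, and since $x_n \sim y_n$ there exists a $\delta$-chain $x_n = w_0, w_1, \ldots, w_k = y_n$. I would then verify that $x, w_1, \ldots, w_{k-1}, y$ is an $\epsilon$-chain from $x$ to $y$: the first step is bounded by $\varrho(f(x), f(x_n)) + \varrho(f(x_n), w_1) < \epsilon/3 + \delta < \epsilon$; interior steps are smaller than $\delta < \epsilon$; the last step is bounded by $\varrho(f(w_{k-1}), y_n) + \varrho(y_n, y) < \delta + \delta < \epsilon$ (with an obvious modification if $k=1$). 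Applying the same reasoning to a $\delta$-chain from $y_n$ to $x_n$ yields an $\epsilon$-chain $y \to x$. As $\epsilon$ was arbitrary, $y \in \CC(x, f)$ and $x \in \CC(y, f)$, so $x \sim y$.

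For part 2 the nontrivial direction is: given an $\epsilon$-chain $\xi : x = x_0, x_1, \ldots, x_n = y$ inside $\CC$, produce an $\epsilon$-chain from $y$ to $x$ inside $\CC$, with the converse following by interchanging the roles of $x$ and $y$. Set $\alpha := \max_j \varrho(f(x_{j-1}), x_j) < \epsilon$ and $s := \epsilon - \alpha > 0$. The main ingredient I would rely on is the standard Conley-theoretic fact $\CC(f|_{\CC}) = \CC$, so that every $x_j \in \CC$ admits, for any $\delta > 0$, a $\delta$-chain from $x_j$ back to itself lying entirely in $\CC$. The fundamental difficulty is that a single step $\varrho(f(x_{j-1}), x_j) < \epsilon$ is not individually reversible, so the construction of the reverse chain cannot be a pointwise reversal of $\xi$; the reversal must be global.

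My plan is to argue by contradiction. Define $R := \{ z \in \CC : \text{there is an } \epsilon\text{-chain from } y \text{ to } z \text{ inside } \CC \}$. Chain recurrence of $y$ inside $\CC$ gives $y \in R$; the triangle inequality applied to the last step of any chain ending at $z \in R$ shows that $R$ is open in $\CC$; extending a chain by the trivial step of length zero at the end yields forward $f$-invariance, and more generally $z \in R$ together with $v \in \CC$, $\varrho(f(z), v) < \epsilon$, gives $v \in R$. Assume for contradiction $x \notin R$. Then $R$ is a proper nonempty open forward-invariant subset of $\CC$, and I would apply the Conley decomposition $\CC = \bigcap_A (A \cup A^{\ast})$ to locate an attractor $A$ of $f|_{\CC}$ contained in $\overline{R}$ whose dual repeller $A^{\ast}$ contains $x$. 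The forward chain $\xi$ starting at $x \in A^{\ast}$ and ending at $y$ inside $R$ should then contradict the trapping-neighborhood structure of $A$ provided the slack $s$ is used to keep the chain inside the neighborhood.

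The hard part, which I expect to be the main obstacle, is extracting a genuine Conley attractor from $R$ for this fixed $\epsilon$ rather than only for sufficiently small parameters; handling this will likely require replacing $R$ by the maximal invariant subset $\bigcap_{k \geq 0} f^k(\overline{R})$ of its closure and carefully tracking how the slack $s$ controls the perturbations. An equivalent direct route is to use, at each $x_j$, an internal $\delta$-chain $x_j \to x_j$ supplied by chain recurrence that is $\delta$-dense in the chain component $[x_j]$, and to splice these loops into a reverse chain $y \to x$; the technical heart in that variant is arranging the splicing so each junction satisfies the $\epsilon$-step condition, which is exactly the step where the chain-component-crossing subtleties of $\xi$ must be absorbed.
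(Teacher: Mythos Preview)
Your Part~1 is correct and essentially identical to the paper's argument, differing only in the choice of constants ($\epsilon/3$ versus $\epsilon/2$).

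Part~2, however, has a genuine gap: neither of your two outlines is carried to completion, and the first one is unlikely to close for precisely the reason you flag---Conley attractors are an $\epsilon\to 0$ notion, while your set $R$ depends on one fixed $\epsilon$, and there is no mechanism here to pass from the latter to the former. The ``slack'' $s=\epsilon-\alpha$ does not help, because the trapping-region structure of a Conley attractor is not calibrated to this particular $\epsilon$.

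The paper's argument is a direct construction that your second approach circles but does not reach. The missing idea is this. List the chain components $Q_1,\ldots,Q_m$ visited, in order, by the given $\epsilon$-chain $x=x_0,\ldots,x_n=y$ inside $\CC$. Because each component $P$ satisfies $f(P)=P$, every step $\varrho(f(x_{j-1}),x_j)<\epsilon$ with $x_{j-1}\in Q_i$ and $x_j\in Q_{i+1}$ forces $\varrho(Q_i,Q_{i+1})<\epsilon$. The distance between sets is symmetric, so the same inequalities hold for the reversed list $Q_m,Q_{m-1},\ldots,Q_1$. Now rebuild the reverse chain: for each pair pick $p\in Q_{i+1}$, $q\in Q_i$ with $\varrho(p,q)<\epsilon$, use $f(Q_{i+1})=Q_{i+1}$ to choose $p'\in Q_{i+1}$ with $f(p')=p$, so that $\varrho(f(p'),q)<\epsilon$ gives a legal hop from $Q_{i+1}$ into $Q_i$; inside each $Q_i$ use internal chain transitivity of chain components to connect consecutive endpoints by an $\epsilon$-chain lying in $Q_i$. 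Concatenation produces an $\epsilon$-chain from $y$ to $x$ contained in $\CC$.

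So the piece your ``splicing'' variant was missing is exactly this: the symmetry of the set distance at the level of chain components, together with surjectivity of $f$ on each component, is what converts each forward inter-component hop into a backward one; the intra-component links then come for free from internal chain transitivity.
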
 
	
	\begin{proof}
		1. Let $\epsilon>0$ and consider $\delta\in (0, \epsilon/2)$ such that
		$$\varrho(x, y)<\delta\;\;\implies\;\;\varrho(f(x), f(y))<\frac{\epsilon}{2}.$$
		Let $n\in\N$ such that $\varrho(x_n, x)<\delta$ and $\varrho(y_n, y)<\delta$ and let $\xi=\{z_0, \ldots, z_k\}$ to be an $\epsilon/2$-chain between $x_n$ and $y_n$. Since $x_n=z_0$ and $y_n=z_k$ we have that
		$$\varrho(f(x), z_1)\leq\varrho(f(x), f(x_n))+\varrho(f(z_0), z_1)<\frac{\epsilon}{2}+\frac{\epsilon}{2}=\epsilon$$
		$$\hspace{-1,5cm}\mbox{ and }\;\;\;\;\varrho(f(z_{n-1}), y)\leq\varrho(f(z_{n-1}), z_k)+\varrho(y_n, y)<\frac{\epsilon}{2}+\delta<\epsilon$$
		and hence $\xi'=\{x, z_1, \ldots, z_{n-1}, y\}$ is an $\epsilon$-chain between $x$ and $y$ implying $y\in\CC_{\epsilon}(x, f)$. By the arbitrariness of $\epsilon$ we have $y\in\CC(x, f)$. Analogously we show that $x\in\CC(y, f)$ and therefore $x\sim y$ as stated.
		
		2. Let $x, y\in\CC$ and assume that there exists an $\epsilon$-chain between $x$ and $y$ contained in $\CC$. Since any chain component is $f$-invariant there are  distinct chain components $P_1, \ldots, P_{n}$ with $y\in P_1$, $x\in P_n$ and such that $\varrho(P_i, P_{i+1})<\epsilon$ for $i=1, \ldots, n-1$.
		
		For $j=1, \ldots, n-1$ let us consider $x_i, z_i\in P_i$ and $y_i\in P_{i+1}$ be such that $\varrho(x_i, y_i)<\epsilon$ and $f(z_i)=x_i$. By setting $y_0=y$ and $z_n=x$ let $\xi_i$ be an $\epsilon$-chain in $P_i$ between $y_{i-1}$ and $z_i$, for $i=1, \ldots, n$. The inequalities   
		$$\varrho(f(z_i), y_i)=\varrho(x_i, y_i)<\epsilon, \;\;\;i=1, \ldots, n-1,$$
		implies that $\xi=\xi_1\cup\xi_2\cup\ldots\cup\xi_n$ is an $\epsilon$-chain between $y$ and $x$ contained in $\CC$ as stated (see Figure \ref{fig1}).
	\end{proof}

		\begin{figure}[h!]
			\begin{center}
				\includegraphics[scale=1.5]{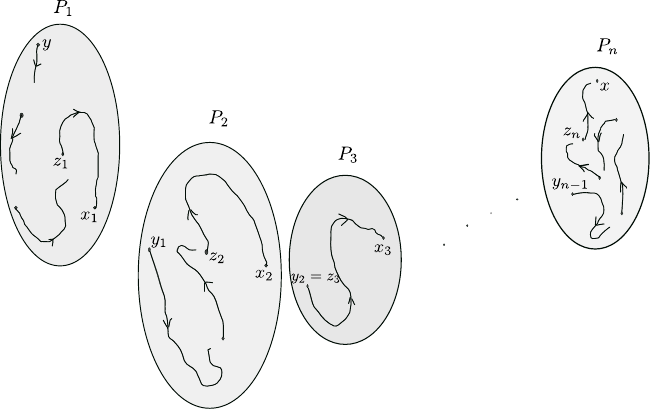}
			\end{center}
			\caption{$\epsilon$-chain between $y$ and $x$.}
			\label{fig1}
		\end{figure}


	\subsection{Attractors-repellor pairs}
	
	There is an important connection between the structure of the set of chain components of $f$ and the set of attractors of $f$. In order to describe this, we first define a closed subset $U\subset X$ to be a {\it trapping region} for $f$ if
	$$\overline{f(U)}\subset U.$$
	For any trapping region $U$ for $f$, the associated {\it attractor} $A$ for $f$, is defined by $\omega(U, f)$, which is nothing more than $\cap_{n\geq 1}f^n(U)$.
	Thus, an attractor $A$ for $f$ is a closed, invariant set which is the limit of the decreasing sequence of iterates $\{f^n(U)\}, \;\;n\geq 1$ for some trapping region $U$. Moreover, the numbers of attractors for $f$ in $X$ is at most countable (see \cite{EAJKMH}).
	
	Let us notice that if $U$ is a trapping region for $f$ with attractor $A$, then any chain recurrent point that is contained in $U$ must actually lie in $A$. 
	
	Suppose now that $U$ is inward for $f$ with attractor $A$. The {\it repellor dual} of $A$ is the set 
	$$A^*:=\{x\in X; \;\mathcal{O}(x, f)\cap U=\emptyset\}.$$
	The pair $(A, A^*)$ is called an {\it attractor-repellor pair}. The relationship between attractor-repellor pairs and $\CC(f)$ comes from the fact that (see \cite{CC}, Chap. II, 6.2.A.)
	\begin{equation}
	\label{chaintheorem}
	\CC(f)=\bigcap\{A\cup A^*; \;\;A \;\mbox{ is an attractor for }\;f\}.
	\end{equation}
	
	\subsection{Functions on hyperspaces}
	
	Let $(X, \varrho)$ be a compact metric space. For any subset $A\subset X$ we denote by  
	$$\KC(A):=\{B\;\mbox{ is a compact subset of }X\;\mbox{ and }\; B\subset A\},$$
	the set of compact subsets of $A$. We also consider the decomposition of $\KC(A)$ in subsets with finite cardinality as follows,
	$$F(A):=\bigcup_{n\geq 1}F_n(A)\;\;\mbox{ with }\;\;F_n(A):=\left\{B\in\KC(A); \;|B|=n\right\}.$$
	It is a standard fact that $F(A)$ is dense in $\KC(A)$ when $A$ is a compact subset of $X$.
	
	It is well known that $(\KC(X), \varrho_H)$ is a metric space, where $\varrho_H$ is the Hausdorff distance given by 
	$$\varrho_H(A, B):=\inf\{\epsilon>0; A\subset N_{\epsilon}(B)\;\mbox{ and }B\subset N_{\epsilon}(A)\},$$
	where $N_{\epsilon}(A)$ stands for the $\epsilon$-neighborhood of $A$. 
	
	The {\it finite topology (or Vietoris topology)} in $\KC(X)$ is the topology generated by the open sets of the form 
	$$\langle U_1, \ldots, U_m\rangle:=\left\{A\in\KC(X);\;\;A\subset\bigcup_{i=1}^mU_i\;\mbox{ and }\; A\cap U_i\neq\emptyset, \;\;\mbox{ for all }i=1, \ldots, n\right\}.$$
	Its a well known fact that the finite topology and the topology induced by the Hausdorff metric are equivalents when $X$ is a compact metric space (see for instance \cite{EM}).	
	
	Let $f:X\rightarrow X$ be a continuous function. The {\it extension} $\bar{f}$ of $f$ from $X$ to $\KC(X)$ is the continuous function 
	$$A\in\KC(X)\mapsto\bar{f}(A):=f(A)\in\KC(X).$$

	The next lemma states several topological properties of hyperspaces that will be needed ahead.	
\begin{lemma}
	\label{lema}
	It holds:

	\begin{itemize}
		\item[1.] Let $\{A_\alpha\}_{\alpha\in\Delta}$ be a family of subsets of $X$. Then
		$$\KC\left(\bigcap_{\alpha\in\Delta}A_{\alpha}\right)=\bigcap_{\alpha\in\Delta}\KC(A_{\alpha}) \;\;\;\mbox{ and }\;\;\;\KC\left(\bigcup_{\alpha\in\Delta}A_{\alpha}\right)=\bigcup_{\alpha\in\Delta}\KC(A_{\alpha}).$$
		
		\item[2.] $\KC(X\setminus U)=\KC(X)\setminus\KC(U)$;\\
		
		\item[3.] $\overline{\KC(U)}=\KC(\overline{U});$\\
	
		\item[4.]$\bar{f}(\KC(U))\subset\KC(f(U))$ for any subset $U\subset X$.
	\end{itemize}

\end{lemma}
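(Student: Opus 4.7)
The plan is to dispatch items 1, 2, and 4 by direct unfoldings of the definition $\KC(A)=\{B\in\KC(X):B\subset A\}$, and to focus the substantive work on item 3, which requires a genuine approximation argument in the Hausdorff metric.

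For item 1, both equalities are verified by rewriting ``$B$ compact and $B\subset \bigcap_\alpha A_\alpha$'' as ``$B$ compact and $B\subset A_\alpha$ for every $\alpha$'', and handling the union identity analogously. Item 2 reduces to the observation that $B\subset X\setminus U$ iff $B\cap U=\emptyset$, expressed in the language of the functor $\KC$. Item 4 is immediate: if $B\in\KC(U)$ then $B\subset U$ is compact, so $\bar f(B)=f(B)$ is compact by continuity of $f$ and contained in $f(U)$, hence $f(B)\in\KC(f(U))$.

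The core of the lemma is item 3. For $\overline{\KC(U)}\subset\KC(\overline U)$, I would take $B\in\overline{\KC(U)}$ and a sequence $B_n\in\KC(U)$ with $\varrho_H(B_n,B)\to 0$; each $x\in B$ is then the limit of some $x_n\in B_n\subset U$, so $x\in\overline U$, giving $B\subset\overline U$. For the converse $\KC(\overline U)\subset\overline{\KC(U)}$, the idea is to approximate any $B\in\KC(\overline U)$ by finite subsets of $U$. Given $n$, use compactness of $B$ to extract a finite $\tfrac{1}{n}$-net $\{b_1,\dots,b_{k_n}\}\subset B$; since each $b_i\in\overline U$, choose $u_i\in U$ with $\varrho(u_i,b_i)<\tfrac{1}{n}$ and set $B_n=\{u_1,\dots,u_{k_n}\}\in\KC(U)$. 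A straightforward two-sided estimate then yields $\varrho_H(B_n,B)\le\tfrac{2}{n}\to 0$, so $B\in\overline{\KC(U)}$.

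The only step requiring genuine care is the finite-net construction in item 3: one must first choose the $b_i$ jointly so as to $\tfrac{1}{n}$-cover $B$, and only then replace them with nearby points of $U$, rather than trying to approximate all of $B$ pointwise at once. Compactness of $B$ is what makes this net extraction possible; everything else reduces to bookkeeping with the definition of $\KC$ and with the Hausdorff distance.
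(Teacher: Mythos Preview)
Your handling of items 1, 2, and 4 is the same as the paper's: a direct unfolding of the definition $\KC(A)=\{B\in\KC(X):B\subset A\}$. The substantive divergence is in item 3. The paper first observes that $\KC(\overline U)$ is closed, giving $\overline{\KC(U)}\subset\KC(\overline U)$, and then argues the reverse inclusion by contrapositive via the Vietoris topology: for $B\notin\overline{\KC(U)}$ it asserts the existence of a neighborhood $V\subset X$ of $B$ with $\langle V\rangle\cap\KC(U)=\emptyset$, deduces $V\cap U=\emptyset$, and concludes $B\not\subset\overline U$. You instead work entirely in the Hausdorff metric, proving both inclusions sequentially and, for the harder direction $\KC(\overline U)\subset\overline{\KC(U)}$, explicitly constructing approximants in $\KC(U)$ by extracting a finite $\tfrac1n$-net of $B$ and perturbing each net point into $U$. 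Your route is more hands-on and self-contained; the paper's is terser but leans on the Vietoris basis, and in fact its key step is not quite right as written: a basic Vietoris neighborhood of $B$ disjoint from $\KC(U)$ need not have the form $\langle V\rangle$ for a single $V$ (take $B=\{0,2\}$ and $U=(0,1)$ in $X=[0,2]$; any neighborhood $V$ of $B$ in $X$ meets $U$). Your finite-net construction avoids this issue entirely and is the more robust of the two arguments.
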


	\begin{proof}
		1. $$B\in \KC\left(\bigcap_{\alpha\in\Delta}A_{\alpha}\right)\iff B\subset \bigcap_{\alpha\in\Delta}A_{\alpha}\iff B\subset A_{\alpha}, \;\mbox{ for all }\alpha\in\Delta$$
		$$\iff B\in \KC\left(A_{\alpha}\right),\; \mbox{ for all } \alpha\in\Delta\iff B\in \bigcap_{\alpha\in\Delta}\KC\left(A_{\alpha}\right)$$
		
		$$B\in \KC\left(\bigcup_{\alpha\in\Delta}A_{\alpha}\right)\iff B\subset \bigcup_{\alpha\in\Delta}A_{\alpha}\iff B\subset A_{\alpha}, \;\mbox{ for some }\alpha\in\Delta$$
		$$\iff B\in \KC\left(A_{\alpha}\right),\; \mbox{ for some  }\alpha\in\Delta\iff B\in \bigcup_{\alpha\in\Delta}\KC\left(A_{\alpha}\right)$$
		
		2. 	$$B\in \KC(X\setminus U)\iff B\subset X\setminus U\iff B\not\subset U\iff B\in \KC(X)\setminus \KC(U)$$
		
		3. 	It is straightforward to see that $\KC(\overline{U})$ is a closed subset of $\KC(X)$ and hence $\overline{\KC(U)}\subset \KC(\overline{U})$. Reciprocally, let $B\in \KC(X)\setminus\overline{\KC(U)}$. There exists a neighborhood $V$ of $B$ such that $\langle V\rangle\cap \KC(U)=\emptyset$. Therefore,
		$$\langle V\rangle\cap \KC(U)=\KC(V)\cap \KC(U)=\KC(V\cap U)$$
		implying that $V\cap U=\emptyset$ and consequently that $B\in\KC(X)\setminus\KC(\overline{U})$.
		
		4. 	$$B\in \bar{f}(\KC(U))\implies B=\bar{f}(A),\;A\in \KC(U)$$
		$$\implies B=f(A), \;A \mbox{ compact subset of }U\implies B\in\KC(f(U)).$$
	\end{proof}
	
	In the sequel we define a projection from the space of subsets $\KC(X)$ to $X$ that still preserve several topological properties. For any subset $\UC\subset\KC(X)$ we can define its $X$-{\it projection} by 
		$$\pi(\UC):=\{x\in X; \exists B\in\UC \;\mbox{ with }\;x\in B\}\subset X.$$
		The next proposition gives us the main topological relation between $\UC$ and its $X$-projection.
		
		\begin{proposition}
			\label{prop}
			It holds:
			\begin{itemize}
			\item[1.] If $\UC$ is open in $\KC(X)$ then $\pi(\UC)$ is an open $X$;\\
			\item[2.] $\overline{\pi(\UC)}=\pi(\overline{\UC})$;\\
			\item[3.]  If $\UC_1\subset\UC_2$ then $\pi(\UC_1)\subset \pi(\UC_2)$;\\
		    \item[4.]  $f(\pi(\UC))=\pi(\bar{f}(\UC));$\\
		    \item[5.] If $A\in\KC(X)$ then $\pi(\KC(A))=A$.\\
			\end{itemize}
		 	\end{proposition}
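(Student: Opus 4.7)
The plan is to dispatch the five items in increasing order of difficulty. Parts (3) and (5) are immediate from the definition of $\pi$: the containment $\UC_1\subset\UC_2$ tautologically forces $\pi(\UC_1)\subset\pi(\UC_2)$, while for (5) every $B\in\KC(A)$ satisfies $B\subset A$, giving $\pi(\KC(A))\subset A$, and the reverse inclusion uses that each singleton $\{x\}\in\KC(A)$ witnesses its own point. Part (4) is a symmetric two-way chase: $y\in f(\pi(\UC))$ iff $y=f(x)$ for some $x\in B$ with $B\in\UC$, iff $y\in f(B)=\bar f(B)$ for some $B\in\UC$, iff $y\in\pi(\bar f(\UC))$.

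For (1) I would use the Vietoris basis recalled in the excerpt. Given $x\in\pi(\UC)$, pick $B\in\UC$ with $x\in B$, open sets $U_1,\ldots,U_m$ in $X$ with $B\in\langle U_1,\ldots,U_m\rangle\subset\UC$, and an index $j$ with $x\in U_j$. For every $y\in U_j$ the set $B\cup\{y\}$ still lies in $\langle U_1,\ldots,U_m\rangle$, because it is covered by $\bigcup_i U_i$ and still meets every $U_i$ through the points of $B$; hence $B\cup\{y\}\in\UC$ and $y\in\pi(\UC)$. Thus $U_j$ is an open neighborhood of $x$ inside $\pi(\UC)$.

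Item (2) carries the real content. For the inclusion $\pi(\overline{\UC})\subset\overline{\pi(\UC)}$, given $x\in B$ with $B\in\overline{\UC}$, I would choose $B_n\in\UC$ with $\varrho_H(B_n,B)\to 0$ and then use the definition of the Hausdorff distance to produce $x_n\in B_n$ with $x_n\to x$; since each $x_n\in\pi(\UC)$, the limit lies in $\overline{\pi(\UC)}$. The reverse inclusion is where I expect the main obstacle. Given $x\in\overline{\pi(\UC)}$ and $x_n\to x$ with $x_n\in B_n\in\UC$, the compactness of $(\KC(X),\varrho_H)$ lets me extract a subsequence $B_{n_k}\to B$, so $B\in\overline{\UC}$; using $B_{n_k}\subset N_\epsilon(B)$ for all large $k$ and every $\epsilon>0$, the limit $x$ lies in $\overline{N_\epsilon(B)}$ for every $\epsilon$, hence in the closed set $B$, giving $x\in\pi(\overline{\UC})$.

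The delicate step is precisely this subsequence extraction: without the compactness of the hyperspace the argument collapses, and it is the only place where something beyond bare unpacking of definitions is used. Everything else in the proposition is bookkeeping against the definitions of $\pi$, $\bar f$, $\KC(\cdot)$ and the Vietoris basis.
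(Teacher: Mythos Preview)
Your proof is correct. Items (3)--(5) match the paper's argument essentially verbatim. For item~(2) the paper organizes things slightly differently: instead of proving the two inclusions directly, it shows first that $\pi(\overline{\UC})$ is closed (via the same compactness-of-$\KC(X)$ subsequence extraction you single out as the delicate step) and then that $\pi(\UC)$ is dense in $\pi(\overline{\UC})$; the content is identical and both arguments live or die on the compactness of the hyperspace.

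For item~(1) your route is genuinely different from the paper's and in fact more robust. The paper claims that for $B\in\UC$ there is a compact neighborhood $V$ of $B$ in $X$ with $\langle V\rangle\subset\UC$, and then uses $V\in\UC$ to get $V\subset\pi(\UC)$. But such a $V$ need not exist: $\langle V\rangle$ contains every singleton $\{v\}$ with $v\in V$, and these can easily fall outside a Vietoris basic neighborhood of $B$ (take $X=\{0,1\}$, $B=X$, $\UC=\{X\}=\langle\{0\},\{1\}\rangle$). Your device of adjoining a point, $B\mapsto B\cup\{y\}$, stays inside the basic set $\langle U_1,\ldots,U_m\rangle$ because the intersection conditions are witnessed by $B$ itself, and it sidesteps this issue completely.
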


			\begin{proof}
				1. Let $x\in \pi(\UC)$ and consider $B\in\UC$ such that $x\in B$. Since $\UC$ is open, there exists a compact neighborhood $V$ of $B$ such that $\langle V\rangle\subset\UC$. In particular, $V\in\UC$ implying that $x\in B\subset V\subset\pi(\UC)$ and showing that $x$ is an interior point of $\pi(\UC)$.
				
				2.	{\bf $\pi(\overline{\UC})$ is closed:} Let $x_n\in\pi(\overline{\UC})$ such that $x_n\rightarrow x$ and consider $B_n\in \overline{\UC}$ with $x_n\in B_n$. By the compacity of $\KC(X)$, there exists $B_{n_k}$, $k\in\N$ such that $B_{n_k}\rightarrow B$. Since $\overline{\UC}$ is closed, we must have $B\in\overline{\UC}$ which implies $x\in B$ and therefore $x\in\pi(\overline{\UC})$.  \\
				
				{\bf $\pi(\UC)$ is dense in $\pi(\overline{\UC})$:} Let $x\in\pi(\overline{\UC})$ and $B\in\overline{\UC}$ with $x\in B$. For any $\varepsilon>0$ there exists $B'\in\UC$ with $\varrho^H(B', B)<\varepsilon$. Therefore, there exists $x'\in B$ such that $\varrho(x', x)<\varepsilon$. In particular, it holds $x'\in\pi(\UC)$ and consequently $\pi(\UC)$ is dense in $\pi(\overline{\UC})$.
				
				3. It holds that $x\in\pi(\UC_1)\iff \exists B\subset\UC_1; \;x\in B$. Since $\UC_1\subset\UC_2$ we get that $B\in\UC_2$ implying that $x\in\pi(\UC_2)$.
				
				4. $$x\in f(\pi(\UC))\iff x=f(y), \;y\in \pi(\UC)\iff \exists B\in\UC\;\mbox{ with }\;x=f(y)\;\mbox{ for some }\; y\in B$$
				$$\iff x\in \bar{f}(B) \mbox{ and }B\in\UC\iff x\in A \mbox{ and }A\in\bar{f}(\UC)\iff x\in \pi(\bar{f}(\UC))$$
				
				5. Since $A\in \KC(A)$ it follows that $A\subset \pi(\KC(A))$. Reciprocally, for any $x\in\pi(\KC(A))$ there is $B\in\KC(A)$ with $x\in B$. However, $B\in\KC(A)$ if and only if $B\subset A$ implying that $x\in A$ and concluding the proof. 
				\end{proof}
	
	\bigskip
	
	Now, we are able to show the relationship between the attractors and repellers of $f$ with those of $\bar{f}$.

\begin{proposition}
	If $A\subset X$ is an attractor, then $\mathcal{K}(A)$ is an attractor. Reciprocally, if $\AC\subset\KC(X)$ is an attractor, then $\pi(\AC)$ is an attractor.
\end{proposition}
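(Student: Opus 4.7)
The plan is to treat the two implications separately by producing explicit trapping regions, then identifying the resulting $\omega$-limits via Lemma~\ref{lema} and Proposition~\ref{prop}.

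For the forward direction, let $U$ be a trapping region for $f$ with associated attractor $A=\omega(U,f)=\bigcap_{n\geq 1}f^n(U)$. I would take $\KC(U)$ as the candidate trapping region for $\bar f$: it is compact in the Hausdorff topology, and by Lemma~\ref{lema}(4), $\bar f(\KC(U))\subset \KC(f(U))\subset \KC(U)$ since $f(U)\subset U$. To identify the attractor, I would first establish by induction that $\bar f^{n}(\KC(U))=\KC(f^{n}(U))$; the nontrivial inclusion ``$\supset$'' follows because, given $B\in\KC(f^n(U))$, the set $B':=(f^n)^{-1}(B)\cap U$ is compact and satisfies $f^n(B')=B$. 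Then Lemma~\ref{lema}(1) yields
$$\omega(\KC(U),\bar f)=\bigcap_{n\geq 1}\KC(f^n(U))=\KC\Bigl(\bigcap_{n\geq 1}f^n(U)\Bigr)=\KC(A).$$

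For the converse, let $\UC$ be a trapping region for $\bar f$ with $\AC=\omega(\UC,\bar f)$. I would take $U:=\pi(\UC)$ as the candidate trapping region for $f$: it is closed by Proposition~\ref{prop}(2), and combining Proposition~\ref{prop}(4) with item (3) one gets $f(U)=\pi(\bar f(\UC))\subset \pi(\UC)=U$. Iterating this identity gives $f^n(U)=\pi(\bar f^n(\UC))$, so the core step is to verify
$$\pi\Bigl(\bigcap_{n\geq 1}\bar f^n(\UC)\Bigr)=\bigcap_{n\geq 1}\pi(\bar f^n(\UC)),$$
from which $\omega(U,f)=\pi(\AC)$ follows at once. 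The inclusion ``$\subset$'' is immediate from Proposition~\ref{prop}(3); for ``$\supset$'', given $x$ in the right-hand side I would pick $B_n\in\bar f^n(\UC)$ with $x\in B_n$, extract a Hausdorff-convergent subsequence $B_{n_k}\to B$ by compactness of $\KC(X)$, observe that the sequence $\bar f^n(\UC)$ is decreasing and closed so that $B$ lies in every $\bar f^N(\UC)$, and finally use Hausdorff convergence to pass $x\in B_{n_k}$ to $x\in B$.

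The main obstacle is precisely this commuting of $\pi$ with the decreasing intersection in the converse direction: for arbitrary families of subsets of $\KC(X)$ the identity $\pi(\bigcap_n \VC_n)=\bigcap_n \pi(\VC_n)$ fails, and its validity here rests crucially on the compactness of $\KC(X)$ together with the fact that trapping-region iterates form a decreasing chain of compact sets. Apart from that one point, the whole argument reduces to the lattice and projection properties already packaged in Lemma~\ref{lema} and Proposition~\ref{prop}.
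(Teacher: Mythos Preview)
Your argument is correct and follows essentially the same route as the paper: in both directions you produce the same trapping regions $\KC(U)$ and $\pi(\UC)$, and the identifications of the resulting attractors rest on exactly the same compactness-and-subsequence arguments the paper uses. The only cosmetic difference is that you package the forward direction via the identity $\bar f^{\,n}(\KC(U))=\KC(f^n(U))$ and the converse via the commutation $\pi\bigl(\bigcap_n\bar f^{\,n}(\UC)\bigr)=\bigcap_n\pi(\bar f^{\,n}(\UC))$, whereas the paper verifies the two inclusions for $\KC(A)$ and $\pi(\AC)$ directly; the underlying computations are the same.
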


\begin{proof}
	Let $U$ be a trapping region for $f$ with attractor $A$. From Lemma \ref{lema} $\KC(U)$ is closed and
	$$\overline{\bar{f}(\UC)}=\overline{\bar{f}(\KC(U))}\subset \overline{\KC\left(f(U)\right)}=\KC\left(\overline{f(U)}\right)\subset\KC(U)=\UC$$
	which implies that $\KC(U)$ is a trapping region for $\bar{f}$. Moreover, since $A$ is $f$-invariant, we have that $\KC(A)$ is $\bar{f}$-invariant and so $\KC(A)\subset\bigcap_{n\in\N}\bar{f}^n(\UC)$. On the other hand, let $B\in \bigcap_{n\in\N}\bar{f}^n(\UC)$. For any $n\in\N$ there exists $B_n\subset U$ such that $B=\bar{f}^n(B_n)$. Therefore, for all $n\in\N$ we get
	$$B=f^n(B_n)\subset f^n(U) \implies B\subset \bigcap_{n\in\N}f^n(U)=A\implies B\in\KC(A)$$
	showing that $\KC(A)=\bigcap_{n\in\N}\bar{f}^n(\UC)$ and consequently $\KC(A)$ is an attractor.
	
	Reciprocally, let $\UC$ be a trapping region for $\bar{f}$ with attractor $\AC$ and consider $U=\pi(\UC)$. By Proposition \ref{prop} we obtain that $U$ is closed and
	$$\overline{f(U)}=\overline{f(\pi(\UC)})=\overline{\pi(\bar{f}(\UC))}=\pi\left(\overline{\bar{f}(\UC)}\right)\subset \pi(\UC)=U$$
	showing that $U$ is a trapping region for $f$. Moreover, since $\AC$ is $\bar{f}$-invariant, we have that $\pi(\AC)$ is $f$-invariant and so $\pi(\AC)\subset\bigcap_{n\in\N}f^n(U)$. On the other hand, if $x\in\bigcap_{n\in\N}f^n(U)$, for each $n\in\N$ there exists $B_n\in \UC$ such that $x\in f^n(B_n)$. Furthermore, for some subsequence $(B_{n_k})$ we have that $\bar{f}^{n_k}(B_{n_k})\rightarrow B$ for some $B\in\KC(X)$. In particular, $B\in\omega(\UC, \bar{f})=\AC$ which implies that $B\subset \pi(\AC)$. Since $x\in f^{n_k}(B_{n_k})$ we get $x\in B\subset \pi(\AC)$ implying that $\bigcap_{n\in\N}f^n(U)=\pi(\AC)$ and ending the proof.	
\end{proof}

\bigskip

Concerning the dual repellors we have the following result.

	\begin{proposition}
		Let $A\subset X$ and consider its associated repeller $A^*$. Then 
		$$\mathcal{K}(A)^*=\mathcal{K}(A^*).$$
	\end{proposition}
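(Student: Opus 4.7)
The plan is to unfold the definition of the dual repellor and translate it between $X$ and $\KC(X)$ via Lemma \ref{lema}. By the previous proposition, $\KC(U)$ is a trapping region for $\bar{f}$ whose attractor is $\KC(A)$, so the associated dual repellor is
\[
\KC(A)^* = \left\{ B \in \KC(X) : \OC(B, \bar{f}) \cap \KC(U) = \emptyset \right\},
\]
equivalently, $B \in \KC(A)^*$ if and only if $\bar{f}^n(B) \notin \KC(U)$ for every $n \geq 0$.

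The first step is to rewrite this membership condition as a condition on subsets of $X$. Lemma \ref{lema}(2) gives $\KC(X) \setminus \KC(U) = \KC(X \setminus U)$, so $\bar{f}^n(B) = f^n(B)$ fails to lie in $\KC(U)$ exactly when $f^n(B) \in \KC(X \setminus U)$, i.e., when $f^n(B) \subset X \setminus U$, or equivalently when $f^n(B) \cap U = \emptyset$. Hence $B \in \KC(A)^*$ if and only if $f^n(B) \cap U = \emptyset$ for every $n \geq 0$.

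The second step closes the proof: the condition that $f^n(B) \cap U = \emptyset$ for every $n$ is exactly the requirement that $\OC(x, f) \cap U = \emptyset$ for every $x \in B$, which by the definition of $A^*$ means $B \subset A^*$, i.e., $B \in \KC(A^*)$. Combining the two steps yields the claimed equality $\KC(A)^* = \KC(A^*)$.

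The only delicate point in the argument is the appeal to Lemma \ref{lema}(2), which identifies $\KC(X) \setminus \KC(U)$ with $\KC(X \setminus U)$; once this is in hand, both inclusions follow simultaneously from the same chain of equivalences, and no additional dynamical input (beyond the fact, supplied by the previous proposition, that $\KC(U)$ traps $\KC(A)$) is required.
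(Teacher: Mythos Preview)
Your proof is correct and follows essentially the same route as the paper's: both start from the fact (established in the previous proposition) that $\KC(U)$ is a trapping region for $\bar f$ with attractor $\KC(A)$, unwind the definition of the dual repellor, and pass through the chain of equivalences $\bar f^{\,n}(B)\notin\KC(U)\Leftrightarrow f^n(B)\subset X\setminus U\Leftrightarrow f^n(B)\cap U=\emptyset$ for all $n$, ending with $B\subset A^*$. The only difference is that you make the appeal to Lemma~\ref{lema}(2) explicit where the paper writes the biconditionals out directly.
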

	
	\begin{proof}
	   As before, if $U$ is a trapping region for $f$ with attractor $A$ then $\UC=\KC(U)$ is a trapping region for $\bar{f}$ with attractor $\KC(A)$. Therefore,  
	   $$\OC(B, \bar{f})\cap \UC=\emptyset\Leftrightarrow\forall n\geq 0, \;\bar{f}^n(B)\in\KC(X)\setminus\UC\Leftrightarrow \forall n\geq 0, \;f^n(B)\subset X\setminus U\Leftrightarrow $$
	   $$\forall n\geq 0, \;f^{n}(B)\cap U=\emptyset\Leftrightarrow \OC(x, f)\cap U=\emptyset, \;\mbox{ for all } \;x\in B.$$
		
	We obtain, 
	$$B\in\KC(A)^*\Leftrightarrow \OC(B, \bar{f})\cap \UC=\emptyset\Leftrightarrow  \OC(x, f)\cap U=\emptyset, \mbox{ for all } x\in B$$
	$$\Leftrightarrow B\subset A^*\Leftrightarrow B\in\KC(A^*)$$
		concluding the proof.		
	\end{proof}

	\section{Chain recurrence on Hyperspaces}
	
	In this section we prove our main results relating the chain recurrent sets of $f$ and its extension $\bar{f}$. We also show that, although there is a close relation between the chain components of the recurrent sets of $f$ and $\bar{f}$, however their grows exponentially.

	\subsection{The chain recurrent set}
	
	From here on we denote only by $\CC$ and $\bar{\CC}$ the chain recurrent sets of $f$ and $\bar{f}$, respectively. The next theorem relates $\CC$ and $\bar{\CC}$.
 	
 	\begin{theorem}
 		\label{main}
 		It holds that $\mathcal{K}(\CC)=\bar{\CC}.$ 
 	\end{theorem}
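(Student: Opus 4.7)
My plan is to prove the two inclusions separately. For $\bar{\CC}\subseteq \KC(\CC)$, I would invoke the Conley attractor--repellor characterization (\ref{chaintheorem}), applied to $\bar{f}$ on the compact metric space $\KC(X)$, which yields $\bar{\CC}=\bigcap\{\AC\cup \AC^* : \AC\text{ is an }\bar{f}\text{-attractor}\}$. By the two preceding propositions, for every $f$-attractor $A$ the pair $(\KC(A),\KC(A^*))$ is an attractor--repellor pair for $\bar{f}$. Restricting the above intersection to this subfamily of $\bar{f}$-attractors can only enlarge the resulting set, so $\bar{\CC}\subseteq \bigcap_{A}(\KC(A)\cup \KC(A^*))$, the intersection ranging over all $f$-attractors $A$. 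Given $B\in \bar{\CC}$, membership $B\in \KC(A)\cup \KC(A^*)$ means $B\subseteq A$ or $B\subseteq A^*$, and in particular $B\subseteq A\cup A^*$. Intersecting over all $A$ and applying (\ref{chaintheorem}) to $f$ gives $B\subseteq \CC$, i.e.\ $B\in \KC(\CC)$.

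For the reverse inclusion $\KC(\CC)\subseteq \bar{\CC}$, I use a density argument. The set $\bar{\CC}$ is closed in $\KC(X)$ (chain recurrent sets of continuous maps on compact metric spaces are always closed) and $F(\CC)=\bigcup_n F_n(\CC)$ is dense in $\KC(\CC)$, so it suffices to show $F(\CC)\subseteq \bar{\CC}$. Fix $B=\{x_1,\ldots,x_n\}\subseteq \CC$ and $\epsilon>0$. Chain recurrence of each $x_i$ under $f$ supplies an $\epsilon$-chain $\xi_i=(x_i=z_i^{(0)},z_i^{(1)},\ldots,z_i^{(L_i)}=x_i)$. Setting $L:=\mathrm{lcm}(L_1,\ldots,L_n)$ and concatenating $\xi_i$ with itself $L/L_i$ times produces, for each $i$, an $\epsilon$-chain $(z_i^{(0)},\ldots,z_i^{(L)})$ of common length $L$ from $x_i$ back to $x_i$. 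Defining $B_j:=\{z_1^{(j)},\ldots,z_n^{(j)}\}$, we have $B_0=B_L=B$, and the elementary estimate $\varrho_H(\{a_1,\ldots,a_n\},\{b_1,\ldots,b_n\})\leq \max_i \varrho(a_i,b_i)$ gives $\varrho_H(\bar{f}(B_{j-1}),B_j)\leq \max_i \varrho(f(z_i^{(j-1)}),z_i^{(j)})<\epsilon$, so $(B_0,\ldots,B_L)$ is the required $\bar{f}$-$\epsilon$-chain from $B$ to itself.

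The main obstacle is the length-synchronization step: the $\epsilon$-chains returning each $x_i$ to itself generally have different lengths $L_i$, while the coordinatewise bundling into a single $\bar{f}$-chain in $\KC(X)$ demands a common length. The remedy is the elementary observation that an $\epsilon$-chain of length $L_i$ from $x_i$ to $x_i$ can be concatenated with itself to produce $\epsilon$-chains of every length in $L_i\N$, so taking $L=\mathrm{lcm}(L_1,\ldots,L_n)$ harmonises the chains. Everything else is routine, relying only on density of $F(\CC)$ in $\KC(\CC)$ and closedness of $\bar{\CC}$.
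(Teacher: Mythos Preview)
Your proof is correct and follows essentially the same route as the paper: the inclusion $\bar{\CC}\subseteq\KC(\CC)$ via the Conley attractor--repellor characterization together with the preceding propositions, and the inclusion $\KC(\CC)\subseteq\bar{\CC}$ via density of $F(\CC)$ plus length-synchronization of individual $\epsilon$-chains. The only cosmetic difference is that the paper synchronizes lengths by induction on $n$ (pairing an $F_n$-chain with an $F_1$-chain at each step), whereas you synchronize all $n$ chains at once with an $\mathrm{lcm}$; the underlying mechanism is identical.
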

 	
 	\begin{proof}  
 		We start to prove that $\mathcal{K}(\CC)\subset\bar{\CC}$. Since $F(\CC)$ is dense in $\mathcal{K}(\CC)$ and $\bar{\CC}$ is closed, it is enough to show that $F(\CC)\subset \bar{\CC}$. Moreover, the fact that $F(\CC)=\bigcup_{n\in\N}F_n(\CC)$ implies that is enough to show that $F_n(\CC)\subset \bar{\CC}$ for each $n\in\N$, which we prove by induction on $n\in\N$.
 		
 		It is easy to see that $F_1(\CC)$ is chain recurrent. Let us then assume that $F_n(\CC)$ is chain recurrent and consider $A=\{x_1, \ldots, n_{n+1}\}\in F_{n+1}(\CC)$. By the induction hypothesis, for a given $\epsilon>0$ there exists a chain $\xi_1$ in $F_n(\CC)$, $B'_0, \ldots, B'_{k_1}\in F_n(\CC_f)$ with $B'_0=B'_n=\{x_1, \ldots, x_n\}$ and $\varrho_H(\bar{f}(B_i), B_{i+1})<\epsilon$ for $i=0, \ldots, k_1$ and a chain $\xi_2$ in $F_1(\CC)$, $\{a_0\}, \ldots, \{a_{k_2}\}\subset X$ with $a_0=a_{k_2}=x_{n+1}$ and $\varrho(f(a_i), a_{i+1})<\epsilon$ for $i=0, \ldots, k_2$. By concatenating $k_2$-times  $\xi_1$ and $k_1$-times $\xi_2$ we can assume that $k=k_1=k_2$. Thus, $A_i=B_i\cup \{a_i\}$, $i=0, \ldots, n+1$ is an $\epsilon$-chain from $A$ to itself in $F_{n+1}(\CC_f)$. Since $\epsilon$ was arbitrary we get that $F_{n+1}(\CC)\subset\bar{\CC}$ implying that $\KC(\CC)\subset \bar{\CC}$.		
 		
 		Reciprocally, we need to show that $\mathcal{K}(\CC)\supset\CC_{\bar{f}}$. Using the relation (\ref{chaintheorem}) we get 
 		$$\KC(C)=\KC\left(\bigcap\{A\cup A^*, \;A \mbox{ is an attractor for }f\}\right)$$
 		$$\bigcap\{\KC(A\cup A^*), \;A \mbox{ is an attractor for }f\}=$$
 		$$\bigcap\{\KC(A)\cup\KC(A^*), \;A \mbox{ is an attractor for }f\}=$$
 		$$\bigcap\{\KC(A)\cup\KC(A)^*, \;A \mbox{ is an attractor for }f\}\supset$$
 		$$\bigcap\{\AC\cup\AC^*, \;\AC \mbox{ is an attractor for }\bar{f}\}=\bar{\CC}$$
 		which ends the proof. 		
 	\end{proof}
 	
 	\bigskip
 	
 	As a direct corollary we have the following:
 	
 	\begin{corollary}
 		Let $X$ be a compact metric space and $f:X\rightarrow X$ a continuous function. Then, $f$ is chain recurrent if and only if $\bar{f}$ is chain recurrent.
 	\end{corollary}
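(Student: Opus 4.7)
The corollary is a direct consequence of Theorem \ref{main}, so the plan is very short. My plan is to reformulate both sides in terms of the chain recurrent sets and then use $\KC(\CC) = \bar{\CC}$.

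First, I would observe that by definition $f$ is chain recurrent exactly when $\CC = X$, and $\bar{f}$ is chain recurrent exactly when $\bar{\CC} = \KC(X)$. Theorem \ref{main} gives $\bar{\CC} = \KC(\CC)$, so the equivalence to prove reduces to
\[
\CC = X \iff \KC(\CC) = \KC(X).
\]

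The forward direction is immediate. For the converse, I would use that the singletons $\{x\}$ for $x\in X$ all lie in $\KC(X)$, hence in $\KC(\CC)$ by assumption, and $\{x\}\in\KC(\CC)$ means $\{x\}\subset\CC$, i.e., $x\in\CC$; this shows $X\subset\CC$, and the reverse inclusion is trivial. (Alternatively, one can invoke part~5 of Proposition \ref{prop} and apply $\pi$ to both sides of $\KC(\CC)=\KC(X)$.)

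There is no real obstacle here; the only subtlety is the observation that $\KC$ is injective on closed subsets of $X$, which follows at once from the presence of singletons in the hyperspace. Hence the corollary follows without any further work.
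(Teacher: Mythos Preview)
Your proof is correct and follows exactly the approach the paper intends: the paper states this as an immediate corollary of Theorem~\ref{main} without giving any further argument, and your reduction to $\CC = X \iff \KC(\CC) = \KC(X)$, together with the singleton observation (or equivalently applying $\pi$ via Proposition~\ref{prop}(5)), is precisely the one-line justification that the paper leaves implicit.
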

 	
 	\begin{remark}
 		The previous corollary shows that the chain recurrence of $f$ and $\bar{f}$ are equivalent. However, the same does not holds for the chain transitivity as stated in \cite{LFCGMAAR}.
 	\end{remark}

 	\subsection{Chain recurrent components}
 	
	In this section we analyze the relation between the chain recurrent components of $\CC$ and of $\bar{\CC}$. We show that under mild conditions the numbers of chain components of $\bar{\CC}$ is much greater than the one of $\CC$ which implies a richness in the dynamics of $\bar{f}$ which cannot be perceived by $f$.
 	
 	Let us denote by $\BC$ the chain components of $f$ and by $\KC(\BC)$ the set of its compact subsets. For any $\JC\in\KC(\BC)$ we define the sets  
 	$$\CC_{\JC}:=\left\{A\in\mathcal{K}(X); \;A\subset\bigcup_{P\in \JC} P \;\mbox{ and }\; A\cap P\neq\emptyset, \;P\in \JC\right\}.$$
 	 	
 	Our aim is to relate the sets $\CC_{\JC}$ with the chain recurrent components of $\bar{\CC}$. The next proposition goes in this direction.
 	 	
 	\begin{proposition}
 		The sets $\CC_{\JC}$ are $\bar{f}$-invariant, compact and satisfy 
 		$$\bar{\CC}=\dot{\bigcup_{\JC\in\KC(\BC)}}\CC_{\JC}.$$
 	\end{proposition}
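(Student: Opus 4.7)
The plan is to use Theorem \ref{main} (which identifies $\bar{\CC}$ with $\KC(\CC)$) together with the quotient projection $p:\CC\to\BC$. The key observation is that to every compact $A\subset \CC$ one can associate the compact set $\JC(A):=p(A)=\{P\in\BC:A\cap P\neq\emptyset\}$, and membership $A\in\CC_{\JC}$ forces $\JC=\JC(A)$. From this simple ``type map'' I get disjointness and covering essentially for free; compactness and $\bar{f}$-invariance are then short separate verifications.

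First I would treat disjointness and covering simultaneously. If $A\in\CC_{\JC}$ then by definition every $P\in\JC$ meets $A$, so $\JC\subset\JC(A)$; conversely, the inclusion $A\subset\bigcup_{P\in\JC}P$ together with the pairwise disjointness of chain components forces $\JC(A)\subset\JC$. Hence the assignment $A\mapsto\JC(A)$ is the unique label that could place $A$ in the family, which gives disjointness of $\{\CC_{\JC}\}_{\JC\in\KC(\BC)}$. For the covering, take $A\in\bar{\CC}=\KC(\CC)$ (by Theorem \ref{main}); the set $\JC:=p(A)$ is compact in $\BC$ as the continuous image of the compact $A$, and $A\in\CC_{\JC}$ holds tautologically. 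The reverse inclusion $\CC_{\JC}\subset\KC(\CC)=\bar{\CC}$ is immediate since $\bigcup_{P\in\JC}P\subset\CC$.

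Next I would dispatch invariance using the fact that each chain component is $f$-invariant, i.e.\ $f(P)=P$. For $A\in\CC_{\JC}$ we have $\bar{f}(A)=f(A)\subset\bigcup_{P\in\JC}f(P)=\bigcup_{P\in\JC}P$, and picking $x\in A\cap P$ yields $f(x)\in f(A)\cap P$, so $\bar{f}(A)\in\CC_{\JC}$; for the reverse inclusion $\CC_{\JC}\subset\bar{f}(\CC_{\JC})$, I would use surjectivity of $f|_{P}$ on each $P\in\JC$ to pick a compact preimage inside $\bigcup_{P\in\JC}P$ meeting every $P\in\JC$. For compactness, set $Y_{\JC}:=\bigcup_{P\in\JC}P=p^{-1}(\JC)$, which is compact since $\JC$ is closed in $\BC$ and $p$ is continuous; then $\CC_{\JC}\subset\KC(Y_{\JC})$ and I only need closedness in $\KC(X)$. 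Given $A_{n}\to A$ in Hausdorff metric with $A_{n}\in\CC_{\JC}$, the subset condition $A\subset Y_{\JC}$ passes to the limit because $Y_{\JC}$ is closed, and for each $P\in\JC$ one extracts a convergent subsequence from points $x_{n}\in A_{n}\cap P$ whose limit lies in $A\cap P$ by Hausdorff convergence plus compactness of $P$.

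The only step that requires any care is checking that the ``nonempty intersection'' clause survives Hausdorff limits, which is the standard subsequence argument above. Everything else is bookkeeping on the quotient $p:\CC\to\BC$, so I expect no genuine obstacle; if a snag arises, it will be in verifying that $\JC(A)=p(A)$ is indeed compact for arbitrary $A\in\KC(\CC)$, but this is immediate from continuity of $p$ and the metrizability of $\BC$ mentioned in Section 2.
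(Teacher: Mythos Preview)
Your argument is correct and follows essentially the same route as the paper's: both identify the unique $\JC$ attached to a given $A$, deduce disjointness and covering from Theorem \ref{main}, and establish closedness of $\CC_{\JC}$ by the same Hausdorff-limit subsequence argument. Your explicit use of the quotient map $p:\CC\to\BC$ streamlines the checks that $\JC(A)=p(A)$ is compact and that $Y_{\JC}=p^{-1}(\JC)$ is closed (the paper carries out both by direct sequence arguments), and you go slightly further than the paper by also sketching the reverse inclusion $\CC_{\JC}\subset\bar{f}(\CC_{\JC})$, which the paper's proof omits despite the statement claiming full invariance.
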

 	
 	\begin{proof}
 	The $f$-invariance of $P\in\BC$ implies that  
    $$f(A)\subset f\left(\bigcup_{P\in\JC}P\right)=\bigcup_{P\in\JC}f(P)\subset \bigcup_{P\in\JC}P\;\;\mbox{ and }\;\;f(A)\cap P\supset f(A\cap P)\neq\emptyset$$
	and hence $\bar{f}\left(\CC_{\JC}\right)\subset\CC_{\JC}$. The disjointness follows from the fact that if $A\in\CC_{\JC_1}\cap\CC_{J_2}$ then  
	$$A\cap P_1\subset \bigcup_{P_2\in\JC_2}P_2\;\;\implies\;\;P_1\cap P_2 \;\;\mbox{ for some }\;\;P_2\in\JC_2\implies \JC_1\subset J_2$$
	and analogously $\JC_2\subset\JC_1$ implying that $\CC_{\JC_1}=\CC_{\JC_2}$.
	
	By Theorem \ref{main}, if $A\in\bar{\CC}$ then $A\subset\CC=\bigcup_{P\in\BC}P$ implying that 
	$$A=\bigcup_{P\in\JC}A\cap P, \;\;\mbox{ where }\;\;\JC:=\{P\in\BC; \;\;A\cap P\neq\emptyset\}.$$
    Moreover, if $P_n\in\JC$ is such that $P_n\rightarrow P$ in $\BC$, the fact that $P_n\cap B\neq\emptyset$ implies the existence of $x_n\in P_n\cap B$. Since $A$ is compact we have that $x_{n_k}\rightarrow x\in A$ and hence $x\in P$ implying $P\cap A\neq\emptyset$ and consequently $P\in\JC$. In particular, $\JC\in\KC(\BC)$ and  
    $$\bar{\CC}=\dot{\bigcup_{\JC\in\KC(\BC)}}\CC_{\JC}.$$
    	
    In order to show the compactness of $\CC_{\JC}$ it is enough to show that it is closed. If $B_n\in\CC_{\JC}$ is such that $B_n\rightarrow B$. We have  
    \begin{itemize}
    	\item[1.] $B\cap P\neq\emptyset$ for any $P\in\JC$;
    	
    	In fact, there exists $x_n\in B_n\cap P$ for any $P\in\JC$ and by compactness $x_{n_k}\rightarrow x\in B\cap P$ implying the assertion.
    	
    	\item[2.] $B\subset \overline{\bigcup_{P\in\JC}P}$;
    	
    	In fact, by the definition of the Hausdorff metric we have that 
    	$$B\subset N_{\epsilon}\left(\bigcup_{P\in\JC}P\right), \;\;\mbox{ for any }\;\;\epsilon>0.$$
    	Since $\bigcap_{\epsilon>0}N_{\epsilon}\left(\bigcup_{P\in\JC}P\right)=\overline{\bigcup_{P\in\JC}P}$ we have the assertion.
    \end{itemize} 
    
    By assertions 1. and 2. above, the closedness of $\CC_{\JC}$ is equivalent to the closedness of $\bigcup_{P\in\JC}P$. 
    
    Let then $x_n\in\bigcup_{P\in\JC}P$ and assume that $x_n\rightarrow x$ in $\CC$. For any $n\in\N$ let $P_n\in\JC$ with $x_n\in P_n$. Since $x_n\rightarrow x$ in $\CC$ we have that $P_n\rightarrow P$ in $\BC$, where $x\in P$. Being that $\JC\in\KC(\BC)$ we must have $P\in\JC$ and consequently $x\in P\subset\bigcup_{P\in\JC}P$ showing that $\bigcup_{P\in\JC}P$ is closed and concluding the proof.   
    \end{proof}
    
    \begin{remark}
    	The above proof shows, that 
    	$$\bigcup_{P\in\JC}P\in\CC_{\JC}, \;\;\mbox{ for any }\;\;\JC\in\KC(\BC)\;\;\implies\;\;\CC_{\JC}\neq\emptyset.$$
    	In particular, each $\CC_{\JC}$ admits a fixed point of $\bar{f}$.
    \end{remark}

 	\bigskip
 	
  The next example shows that such sets can be much greater than one expects.
 	
 	\begin{example}
 		Let us consider $X=\{a, b, c\}$ endowed with the zero-one metric and define $f:X\rightarrow X$ by the relations
 		$$f(a)=b, \;\;\;f(b)=a\;\;\mbox{ and }\;\;f(c)=a.$$
 		For such metric, there are no chains from $a$ or $b$ to $c$ for any $\epsilon<1$ and therefore, $\CC=\{a, b\}$. On the other hand, the Hausdorf metric induced  in $\KC(X)$ is also the zero-one metric and therefore $\bar{\CC}=\{\{a\}, \{b\}, \{a, b\}\}= \KC(\CC)$. 
 		
 		Since $\BC=\{\CC\}$ we have that $\CC_{\BC}=\bar{\CC}$. On the other hand, the chain recurrent components of $\bar{\CC}$ are given by 
 		$$\{\{a\}, \{b\}\}\;\;\;\mbox{ and }\;\;\;\{\{a, b\}\}$$
 		showing that the sets $\CC_{\JC}$ do not need to be chain transitive. 		
 	\end{example}
 	 	
 	In fact, as in the above example, the sets $\CC_{\JC}$ are an ``outer" approximation for the chain components of $\bar{f}$.

 	\begin{proposition}
 		Let $A_i\in\CC_{\JC_i}$, $\JC_i\in\KC(\BC)$ for $i=1, 2$. Then 
 		$$A_1\sim A_2\;\;\;\implies\;\;\;\CC_{\JC_1}=\CC_{\JC_2}.$$
 		In particular, $\CC_{\JC}$ is a disjoint union of chain components of $\bar{f}$.
 	\end{proposition}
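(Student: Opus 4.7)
The plan is to lift the hyperspace equivalence $A_1 \sim A_2$ to a chain-level equivalence inside $\CC$, by tracing a single trajectory through the approximating chain. By the symmetry of $\sim$, it suffices to prove $\JC_1 \subset \JC_2$: I fix $P \in \JC_1$, choose a point $x_0 \in A_1 \cap P$, and aim to produce a point of $A_2$ that lies in $P$.

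For each $\epsilon > 0$, pick an $\epsilon$-chain $A_1 = B_0, B_1, \ldots, B_n = A_2$ in $\bar{\CC}$; by Theorem \ref{main} every $B_i$ is contained in $\CC$. Since $\varrho_H(\bar f(B_{i-1}), B_i) < \epsilon$, I inductively select $x_i^\epsilon \in B_i$ with $\varrho(f(x_{i-1}^\epsilon), x_i^\epsilon) < \epsilon$ (setting $x_0^\epsilon := x_0$), producing an $\epsilon$-chain $x_0, x_1^\epsilon, \ldots, x_n^\epsilon$ entirely contained in $\CC$ and ending at $x_n^\epsilon \in A_2$. By Lemma \ref{easy} part 2 applied inside $\CC$, there is also an $\epsilon$-chain in $\CC$ from $x_n^\epsilon$ back to $x_0$. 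Along a subsequence $\epsilon_k \to 0$, compactness of $A_2$ gives $x_n^{\epsilon_k} \to x^* \in A_2$.

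I then verify $x_0 \sim x^*$ in $\CC$, which forces $x^* \in P$ and finishes the argument. For any $\delta > 0$, uniform continuity of $f$ combined with $\epsilon_k \to 0$ and $x_n^{\epsilon_k} \to x^*$ allows me to replace the moving endpoint by $x^*$ in both the forward chain from $x_0$ to $x_n^{\epsilon_k}$ and its reverse; this yields $\delta$-chains in $\CC$ between $x_0$ and $x^*$ in both directions, so $x_0 \sim x^*$ and $x^* \in P \cap A_2$, hence $P \in \JC_2$. The ``in particular'' clause is then immediate from the disjoint decomposition $\bar{\CC} = \dot{\bigcup}_{\JC \in \KC(\BC)} \CC_\JC$ established in the previous proposition: any chain component of $\bar f$ must lie inside a single $\CC_\JC$, so each $\CC_\JC$ is a disjoint union of chain components of $\bar f$. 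The most delicate point is the reversal step, because Lemma \ref{easy} part 2 requires the chain to lie in $\CC$; this is exactly why I draw the intermediate vertices $x_i^\epsilon$ from $B_i \subset \CC$ rather than from arbitrary $\epsilon$-close points in $X$.
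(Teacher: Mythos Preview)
Your argument is correct and follows essentially the same route as the paper's proof: project the hyperspace $\epsilon$-chain between $A_1$ and $A_2$ (taken inside $\bar{\CC}=\KC(\CC)$) to an $\epsilon$-chain in $\CC$ with one endpoint the fixed $x_0\in A_1\cap P$, invoke Lemma~\ref{easy} part 2 to obtain the reverse chain, extract a convergent subsequence of the moving endpoints, and conclude that the limit is $\sim x_0$ and hence lies in $P$. The only cosmetic differences are that the paper runs the hyperspace chain from $A_2$ to $A_1$ (projecting backward so that the fixed point $x$ is the terminus) and phrases the conclusion via the closedness of $\JC_2$ in $\BC$ rather than via $x^*\in A_2\cap P$; both formulations yield $P\in\JC_2$ immediately.
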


 	\begin{proof}
 		Since $A_1, A_2\in\bar{\CC}$ and $A_1\sim A_2$ we have that $A_1$ and $A_2$ are in the same chain component of $\bar{f}$. Therefore, for any $\epsilon>0$ there exists a chain between $A_2$ and $A_1$ contained in $\bar{\CC}$\footnote{Here we used the fact that two points in the same chain component can be joined by a chain contained in the chain component.}. Let then $P\in\JC_1$ and consider $x\in A_1\cap P$ and a $1/n$-chain between $A_2$ and $A_1$. There exists $x_n\in P_n\in\JC_2$ and a $1/n$-chain component betwenn $x_n$ and $x$ contained in $\CC$. By Lemma \ref{easy} there exists an $1/n$-chain between $x$ and $x_n$ contained in $\CC$. Being that $\CC$ is compact, we can assume w.l.o.g. that $x_n\rightarrow x'$ and hence $x'\sim x$ implying that $x'\in P$. Therefore, $P_n\rightarrow P$ in $\BC$ and since $\JC_2$ is closed we must have $P\in\JC_2$. Since $P\in\JC_1$ is arbitrary we get $\JC_1\subset\JC_2$. Analogously we have $\JC_2\subset\JC_1$ and hence $\CC_{\JC_1}=\CC_{\JC_2}$ as stated.  
 	\end{proof}
 	
 		\bigskip
 		
 		The next step is to analyze when the sets $\CC_{\JC}$ are chain recurrent. If $\JC=\{P\}$ is an unitary set, we have that $\CC_{\JC}=\KC(P)$ and by invariance  $\overline{f|_{P}}=\bar{f}|_{\KC(P)}$. For the unitary case, there are several equivalences for the chain transitivity of the induced map (see Theorem 3 of \cite{LFCGMAAR}). Our aim is to show that, in fact, the knowledge of the chain components of $\bar{f}$ are well-known if one have transitivity of the maps restricted to $\KC(P)$ for any chain component $P\in\BC$.
 		
 		Let then $P\in\BC$ be a chain component and assume that $\KC(P)$ is chain transitive. In this case, for any $A, B\in\KC(P)$ there exists $n\in\N$ such that for any $m\geq n$ there is a chain $\xi$ connecting $A$ and $B$ with exactly lenght $m$.
 	
 	In fact, since $P$ is $f$-invariant we have that, as an element of $\KC(P)$, $P$ is a fixed point of $\bar{f}$. For any given $A, B\in\KC(P)$ there exists chains $\xi_1, \xi_2$ connecting $A$ to $P$ and $P$ to $B$ with lenght $n_1, n_2$, respectively. If $n=n_1+n_2$, for any $m\geq n$ we can ``insert" $m-n$ points in the chain $\xi_1$ equals to $P$ and concatenate with the chain $\xi_2$ in order to obtain a chain from $A$ to $B$ with length $m$ as stated.
 	 	
 	Now we are able to show our main result relating the chain components of $f$ and $\bar{f}$. 
 	
 	\begin{theorem}
 		The set $\CC_{\JC}$ is a chain component of $\bar{f}$ if $\KC(P)$ is chain transitive for any $P\in\JC$. In particular, $\CC_{\JC}$ are the chain components of $\bar{f}$ if $\KC(P)$ is chain transitive for any $P\in\BC$.
 	\end{theorem}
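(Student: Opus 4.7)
The plan rests on the preceding proposition, which already shows that $\CC_{\JC}$ is $\bar{f}$-invariant, compact, and decomposes as a disjoint union of chain components of $\bar{f}$. Consequently, once I prove that $\CC_{\JC}$ itself is chain transitive under $\bar{f}$, the maximality of chain components forces $\CC_{\JC}$ to be a single chain component. For chain transitivity, I plan to reduce the problem to a distinguished fixed point, namely $B := \bigcup_{P \in \JC} P$: by the remark after the previous proposition $B \in \CC_{\JC}$, and the $f$-invariance of each chain component gives $\bar{f}(B) = B$, so it suffices to show $A \sim_{\bar{f}} B$ for every $A \in \CC_{\JC}$. The ``in particular'' clause then follows at once from the disjoint decomposition $\bar{\CC} = \dot{\bigcup}_{\JC \in \KC(\BC)} \CC_{\JC}$, since under the stronger hypothesis every $\CC_{\JC}$ is already known to be a single chain component.

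To construct an $\epsilon$-chain from $A$ to $B$ within $\CC_{\JC}$, I would fix $\delta \in (0, \epsilon/2)$ with $\varrho(x, y) < \delta \Rightarrow \varrho(f(x), f(y)) < \epsilon/2$. For each $P \in \JC$, both $A \cap P$ and $P$ lie in $\KC(P)$, and the chain-transitivity hypothesis combined with the length-flexibility observation preceding the theorem yields $\delta$-chains $C_0^P = A \cap P, C_1^P, \ldots, C_N^P = P$ in $\KC(P)$ of a common length $N$. Setting
\[
D_j := \overline{\bigcup_{P \in \JC} C_j^P} \in \KC(X), \qquad j = 0, 1, \ldots, N,
\]
I would then check the routine facts: $D_j$ lies in $B$ and intersects every $P \in \JC$ (since $C_j^P \subset D_j$), so $D_j \in \CC_{\JC}$; $D_0 = A$ and $D_N = B$ because $A$ and $B$ are closed; and assembling the $P$-by-$P$ Hausdorff estimates produces $\varrho_H(\bar{f}(D_j), D_{j+1}) < \epsilon$. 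A symmetric construction gives the reverse chain, yielding $A \sim B$.

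The main obstacle I anticipate is the length-synchronization step: when $\JC$ is infinite, the chain-transitivity of each $\KC(P)$ only provides a length $N_P$ which need not be uniformly bounded on $\JC$ a priori. My plan is to exploit the compactness of $\JC$ in $\KC(\BC)$ to extract a finite $\delta'$-net $\{P_1, \ldots, P_k\} \subset \JC$, set $N := \max_i N_{P_i}$, and for each remaining $Q \in \JC$ within $\delta'$ of some $P_{i(Q)}$, transfer the chain chosen in $\KC(P_{i(Q)})$ into $\KC(Q)$ via a Hausdorff-close substitute of the form $C_j^Q := \overline{Q \cap N_{\delta'}(C_j^{P_{i(Q)}})}$. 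Each transferred step picks up an $O(\delta')$ error that is absorbed into $\epsilon$ by taking $\delta$ and $\delta'$ small. The delicate part is the endpoints: $A \cap Q$ and $A \cap P_{i(Q)}$ need not be Hausdorff-close, so a small patch at the first and last rungs of each transferred chain---obtained by taking a few extra $\delta$-steps within $\KC(Q)$ from $A \cap Q$ to the transferred starting set, and symmetrically at the end---will be needed to match the prescribed endpoints. This patching, together with the verification that the resulting glued chain still lies inside $\CC_{\JC}$, is where I expect the real technical work of the proof to sit.
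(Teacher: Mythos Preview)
Your reduction (show $\CC_{\JC}$ is chain transitive, then use that it is already a union of full chain components) and your treatment of finite $\JC$ via unions of synchronized chains match the paper's argument. The divergence is entirely in the infinite-$\JC$ case.

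The paper does \emph{not} attempt to synchronize chains across all $P\in\JC$. Instead it approximates $A$ and $B$ by finite-index pieces and passes to the limit. Given $\delta>0$, compactness of $A$ (not of $\JC$) gives finitely many $P_1,\dots,P_n\in\JC$ with $A\subset N_\delta\bigl(\bigcup_i A\cap P_i\bigr)$, and likewise for $B$; with $\JC_{A,B}$ the (finite) union of these two families one sets $A_\delta:=\bigcup_{P\in\JC_{A,B}}A\cap P$ and $B_\delta$ analogously. Then $A_\delta,B_\delta\in\CC_{\JC_{A,B}}$, the finite case already proved gives $A_\delta\sim B_\delta$, and since $\varrho_H(A,A_\delta)<\delta$ and $\varrho_H(B,B_\delta)<\delta$, letting $\delta\to 0$ and applying item~1 of Lemma~\ref{easy} yields $A\sim B$. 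No transfer construction, no endpoint patching, no uniform chain length over an infinite family.

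Your transfer scheme carries a genuine gap beyond the endpoint issue you anticipate. It tacitly assumes that if $Q$ is $\delta'$-close to $P_{i(Q)}$ in $\BC$ then $Q$ and $P_{i(Q)}$ are Hausdorff-close as subsets of $X$, so that $C_j^Q:=\overline{Q\cap N_{\delta'}(C_j^{P_{i(Q)}})}$ lies within controlled Hausdorff distance of $C_j^{P_{i(Q)}}$. But the assignment $\BC\to\KC(X)$, $P\mapsto P$, is in general only upper semicontinuous, not continuous: for instance, with $f=\id$ on $X=S^1\cup\{p_n\}$ where the $p_n$ are isolated points accumulating on some $p\in S^1$, the singleton components $\{p_n\}$ converge to the component $S^1$ in $\BC$ but to $\{p\}$ in $\KC(X)$. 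In such situations your transferred sets need not be close to the originals and the $\epsilon$-chain estimate fails. The paper's approximation-then-limit argument sidesteps this entirely and is substantially shorter.
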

 	
 	\begin{proof}
 		Let us assume first that $\JC=\{P_1, \ldots, P_n\}$ and that $\KC(P_i)$ is chain transitive for $i=1, \ldots, n$. If $A, B\in\CC_{\JC}$ we have that 
 		$$A=\dot{\cup}A_i \;\;\mbox{ and }\;\;B=\dot{\cup}B_i\;\;\mbox{ where }\;\;A_i=A\cap P_i \;\;\mbox{ and }\;\;B_i=B\cap P_i, \;\;i=1, \ldots, n.$$
 		By the previous discussion and the assumption on the chain transitiveness of $\KC(P_i)$ there exist $\epsilon$-chains $\xi_i$ from $A_i$ to $B_i$ with same length $n\in\N$ for $i=1, \ldots, n$. The chain $\xi$ given by $\xi_1\cup\cdots\cup\xi_n$ is then an $\epsilon$-chain from $A$ to $B$ showing, by arbitrariness, that $\CC_{\JC}$ is chain transitive.
 		
 		Let us consider now the case $|\JC|=\infty$ and assume that $\KC(P)$ is chain transitive for any $P\in\JC$. For any given $\delta>0$ and any $A\in\CC_{\JC}$ we have that 
 		$$A\subset N_{\delta}\left(\bigcup_{P\in\JC}A\cap P\right)=\bigcup_{P\in\JC}N_{\delta}(A\cap P).$$
 		By compactness, there exists $\JC_{A}=\{P_1, \ldots, P_n\}\subset\BC$ such that 
 		\begin{equation}
 		\label{eq}
 		A\subset \bigcup_{i=1}^n N_{\delta}(A\cap P_i)=N_{\delta}\left(\bigcup_{i=1}^nA\cap P_i\right).
 		\end{equation}
 		
 		Let then $A, B\in \CC_{\JC}$ and consider as above $\JC_A, \JC_B\subset\BC$ satisfying (\ref{eq}). By setting $\JC_{A, B}=\JC_A\cup\JC_B$ we have that  
 		$$A\subset N_{\delta}\left(\bigcup_{P\in\JC_A}A\cap P\right)\subset N_{\delta}\left(\bigcup_{P\in\JC_{A, B}}A\cap P\right)\;\;\mbox{ and }\;\;B\subset N_{\delta}\left(\bigcup_{P\in\JC_B}B\cap P\right)\subset N_{\delta}\left(\bigcup_{P\in\JC_{A, B}}B\cap P\right).$$
 		If 
 		$$A_{\delta}=\bigcup_{P\in\JC_{A, B}}A\cap P \;\;\mbox{ and }\;\;B_{\delta}=\bigcup_{P\in\JC_{A, B}}B\cap P$$
		we have that $A_{\delta}, B_{\delta}\in\CC_{\JC_{A, B}}$ and, since $A_{\delta}\subset A$ and $B_{\delta}\subset B$, the above implies $\varrho_H(A, A_{\delta})<\delta$ and $\varrho_H(B, B_{\delta})<\delta$.
		Moreover, the hypothesis that $\KC(P)$ is chain transitive for any $P\in\JC$ implies by the finite case that $\CC_{\JC_{A, B}}$ is chain transitive and hence $A_{\delta}\sim B_{\delta}$. 
		
		By the arbitrariness of $\delta>0$ there exists $A_n\rightarrow A$ and $B_n\rightarrow B$ with $A_n\sim B_n$ which by Proposition \ref{easy} implies $A\sim B$ and hence $\CC_{\JC}$ is chain transitive.  
	\end{proof}

A direct corollary of the previous results is the following.

\begin{corollary}
	If $\KC(P)$ is chain transitive for any $P\in\BC$ then 
	$$|\BC|<\infty\;\;\implies\;\;|\bar{\BC}|=2^{|\BC|}-1.$$
\end{corollary}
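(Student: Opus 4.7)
The plan is to read the corollary as a direct counting consequence of the preceding theorem, once we understand what $\KC(\BC)$ looks like in the finite case. By the theorem, the hypothesis that $\KC(P)$ is chain transitive for every $P\in\BC$ gives that the chain components of $\bar{f}$ are exactly the sets $\CC_{\JC}$ as $\JC$ ranges over $\KC(\BC)$. Thus computing $|\bar{\BC}|$ reduces to counting how many distinct nonempty $\CC_{\JC}$ there are, which in turn amounts to (i) showing the assignment $\JC\mapsto \CC_{\JC}$ is injective, (ii) showing each $\CC_{\JC}$ is nonempty, and (iii) counting $\KC(\BC)$ when $|\BC|<\infty$.

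For (i), I would appeal to the disjointness statement already established in the proposition that $\bar{\CC}=\dot{\bigcup}_{\JC\in\KC(\BC)}\CC_{\JC}$: if $\JC_1\neq \JC_2$, then $\CC_{\JC_1}\cap\CC_{\JC_2}=\emptyset$, so in particular $\CC_{\JC_1}\neq \CC_{\JC_2}$ whenever both are nonempty. For (ii), I would invoke the remark following the proposition, which observes that $\bigcup_{P\in\JC}P$ itself belongs to $\CC_{\JC}$, so $\CC_{\JC}\neq\emptyset$ for every $\JC\in\KC(\BC)$.

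For (iii), when $\BC$ is finite it carries the discrete topology (the chain components are closed and pairwise disjoint, so any subset is both closed and compact); hence $\KC(\BC)$ is precisely the collection of all nonempty subsets of $\BC$, of which there are $2^{|\BC|}-1$. Combining with (i) and (ii) yields $|\bar{\BC}|=|\KC(\BC)|=2^{|\BC|}-1$.

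No step here looks like a real obstacle, since all the technical work was done in the theorem and in the preceding proposition; the only mild subtlety is to justify that $\KC(\BC)$ really is the power set minus the empty set in the finite case, i.e.\ that the quotient topology on $\BC$ is discrete when $\BC$ is finite. This follows because $\BC$ is a finite compact metrizable space (as noted after the definition of $\BC$), and every finite Hausdorff space is discrete.
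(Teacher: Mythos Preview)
Your proposal is correct and follows essentially the same approach as the paper: invoke the theorem to identify the chain components of $\bar{f}$ with the sets $\CC_{\JC}$ for $\JC\in\KC(\BC)$, and then count $|\KC(\BC)|=2^{|\BC|}-1$ in the finite case. The paper's proof is a one-line version of this, whereas you spell out carefully why the parametrization $\JC\mapsto\CC_{\JC}$ is a bijection and why $\KC(\BC)$ is the full power set minus $\emptyset$ when $\BC$ is finite.
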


\begin{proof}
	In fact, under the assumption that $\KC(P)$ is chain transitive for any $P\in\BC$ we have that the chain components of $\bar{f}$ are parametrized by the nonempty compact subsets of $\BC$ and hence $|\bar{\BC}|=|\KC(\BC)|=2^{|\BC|}-1$.
\end{proof}

\begin{remark}
	It is important to notice that the chain transitivity of $\KC(P)$ it is not direct from the chain transitivity of $P$. In fact, in \cite{LFCGMAAR} the authors give several equivalence for the transitivity of $\KC(P)$ 
\end{remark}

	Next we give an example where $\BC$ is enumerable and $\bar{\BC}$ is nonenumerable.

	\begin{example}
		Let $X=[0, 1]$ and consider
		$$f:X\rightarrow X, \;\;\mbox{ given by }\;\;f(x):=\left\{\begin{array}{cc}
		x\left|\sin\left(\frac{\pi}{x}\right)\right|, & \;\mbox{ if }x>0\\
		0 & \;\mbox{ if }x=0. 
		\end{array}\right.$$ 
		
		It is not hard to see that 
		$$A=\{0\}\cup\left\{\frac{1}{2k+1}, \;k\geq 0\right\}$$
		is the set of fixed points of $f$ (see Figure \ref{fig2})  and hence $A\subset\CC(f)$. On the other hand, a simple calculation shows that $f^n(x)\rightarrow 0$ for any $x\notin A$ and hence   
		$$A_k:=\left[0, \frac{1}{2k+1}\right]\;\;\mbox{ and }\;\;A_k^*=\left\{\frac{1}{2m+1}, \;\;m\leq k\right\}$$
		is an attractor-repellor pair of $f$. Therefore,
		$$\CC(f)\subset\bigcap_k A_k\cup A_k^*\subset A\;\;\implies\;\;\CC(f)=A,$$
		showing that $f$ has an enumerable number of chain components.
		
		In particular, $\BC=\{\{x\}, \;\;x\in\CC\}$ and since $\KC(\{x\})=\{x\}$ we have that $\KC(\{x\})$ is chain transitive for any $x\in\CC$. By the previous results we have that the chain components of the induced map $\bar{f}$ are of the form $\CC_{\JC}$ with $\JC\in\KC(\BC)\}$. However, 
		$$\BC\sim\CC=\{0\}\cup\left\{\frac{1}{2k+1}, \;k\geq 0\right\}\implies \KC(\BC)=\{\JC\subset A; \;\;|\JC|<\infty\}\cup A$$
		and hence $\bar{f}$ has an unenumerable numbers of chain components.
	\end{example}

	\begin{figure}[h!]
		\begin{center}
			\includegraphics[scale=0.75]{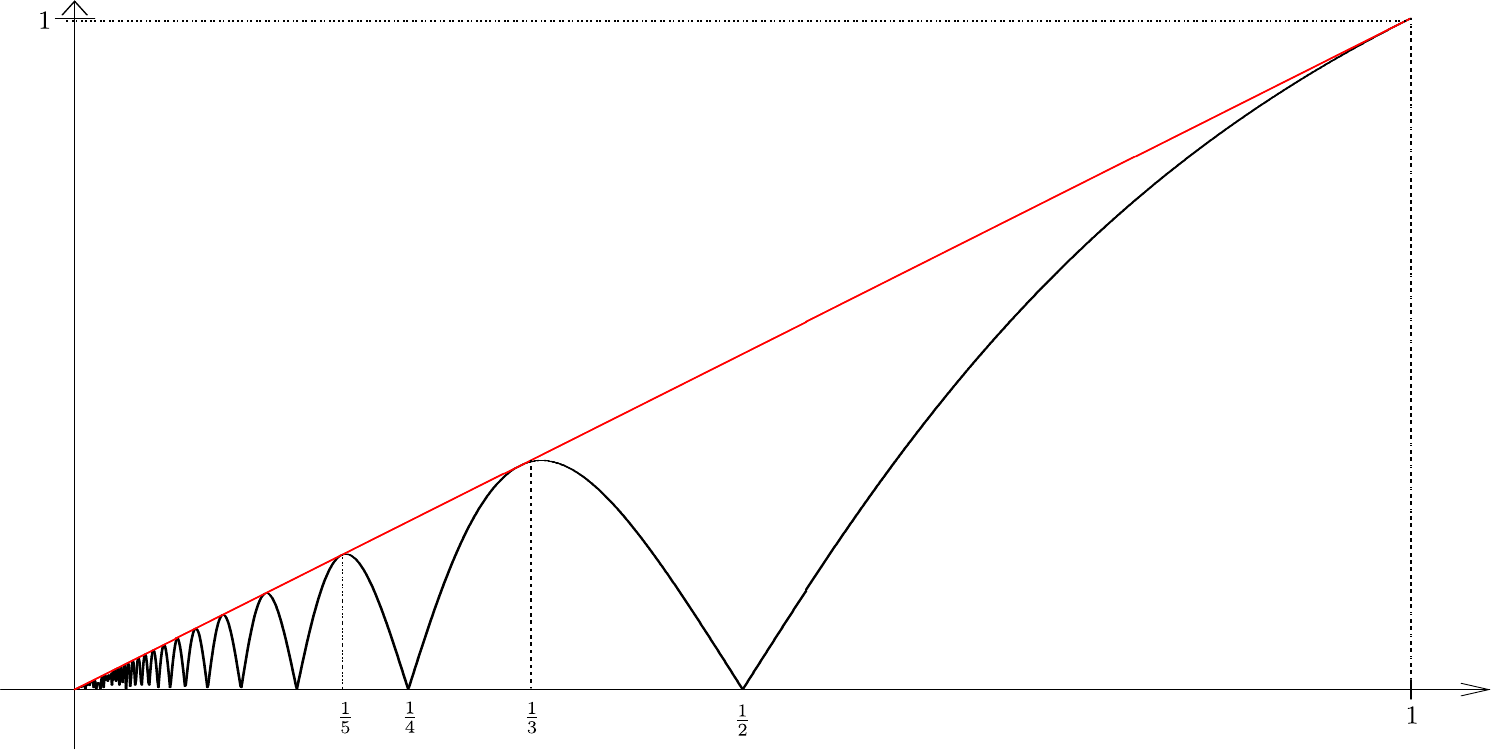}
		\end{center}
		\caption{Graphic of $f$.}
		\label{fig2}
	\end{figure}

	\begin{remark}
		Normally, the complexity of the total space $\mathcal{K}(X)$ is greater than
		the complexity of the base space $X.$ For instance, respect to the
		topological entropy we know that 
		\[
		top_{e}(\mathcal{K}(X),\bar{f})\geq top_{e}(X,f),
		\]
		(see \cite{canovas}). However, in \cite{ROMAN1,ROMAN3} the author consider
		the extention of $f$ to $\bar{f}$ over the class of compact an convex
		subsets of a real interval $X$. In this case, the complexity of $\bar{f}$
		decreases. We intend to continue to search on this topic by considering new
		extentions over distinguished subspaces of $\mathcal{K}(X).$\newline
	\end{remark}

\end{document}